\newcommand{\xdownarrow}[1]{%
  {\left\downarrow\vbox to #1{}\right.\kern-\nulldelimiterspace}
}
\newcommand{\cmark}{\ding{51}}%
\newcommand{\xmark}{\ding{55}}%
\newtheorem{thm}[equation]{Theorem}
\newtheorem{cor}[equation]{Corollary}
\newtheorem{lemma}[equation]{Lemma}
\newtheorem{prop}[equation]{Proposition}
\theoremstyle{definition}
\newtheorem{defn}[equation]{Definition}
\theoremstyle{remark}
\newtheorem{remark}[equation]{Remark}
\newtheorem{example}[equation]{Example}
\definecolor{gRed}{HTML}{Fc8d59}
\definecolor{gGreen}{HTML}{2b83ba}
\definecolor{gYellow}{HTML}{FFFFbF}
\definecolor{gRed2}{HTML}{d7191c}
\title{\texorpdfstring{$N_\infty$}{N-infinity}-operads and associahedra}
\author{Scott Balchin}%
\address{Max Planck Institute for Mathematics, Vivatsgasse 7, 53111 Bonn, Germany}
\email{balchin@mpim-bonn.mpg.de}
\author{David Barnes}%
\address{Queen's University Belfast, Mathematical Sciences Research Centre, University Road, Belfast, BT7 1NN. UK.
}
\email{d.barnes@qub.ac.uk}
\author{Constanze Roitzheim}%
\address{University of Kent, School of Mathematics, Statistics and Actuarial Science, Sibson Building, Canterbury, CT2 7FS, UK}
\email{csrr@kent.ac.uk}
\begin{document}
\maketitle

\begin{abstract}
We provide a combinatorial approach to studying the collection of $N_\infty$-operads in $G$-equivariant homotopy theory for $G$ a finite cyclic group of prime power order. In particular, we show that for $G=C_{p^n}$ the natural order on the collection of $N_\infty$-operads is in bijection with the poset structure of the $(n+1)$-associahedron. We further provide a lower bound for the number of possible $N_\infty$-operads for any finite cyclic group $G$. As such, we have reduced an intricate problem in equivariant homotopy theory to a manageable combinatorial problem. 
\end{abstract}

\section{Introduction}

Let $X$ be a topological space equipped with a multiplication $m \colon X \times X \to X$. We say that the multiplication is \emph{homotopy commutative} if the diagram
$$
\xymatrix{X \times X \ar[r]^-{m} \ar[d]|-{\text{twist}} & X \\ X \times X \ar[ur]_-{m}}
$$
commutes up to homotopy, and all higher coherences are satisfied up to homotopy. Homotopy commutativity is neatly encoded by the theory of \emph{$E_\infty$-operads} \cite{MR0420610}. These are those symmetric topological operads such that the space $\mathcal{O}_n$ is $\Sigma_n$-contractible for all $n \geq 0$. As the homotopy type of such operads is determined by the homotopy theory of the underlying spaces $\mathcal{O}_n$, it follows that all $E_\infty$-operads are homotopy equivalent~\cite{MR2016697}. In particular, there is a unique (up to homotopy) way for a space to be homotopy commutative.

We now move to an equivariant setting. We fix a finite group $G$ and consider topological spaces equipped with a $G$-action. In this setting, constructing an appropriate version of homotopy commutativity via  $E_\infty$-operads has its difficulties. For example, there are $G$-operads whose underlying non-equivariant operads are $E_\infty$, but whose derived category of algebras are inequivalent.

To correctly encode homotopy commutativity in the equivariant setting, $N_\infty$-operads were developed. Each $N_\infty$-operad encodes a different notion of homotopy commutativity with respect to the structure of the group. This is in stark contrast to the non-equivariant case where there was a unique way of being homotopy commutative. Understanding different $N_\infty$-operads for a group $G$ is however, in general, challenging. As such, having access to a combinatorial framework in which to study them is of great value. 

Recent work by Blumberg and Hill \cite{MR3406512} led to the conjecture, soon verified by \cite{bonventre,MR3848404,rubin}, that for a group $G$, the data of an $N_\infty$-operad is equivalent to a certain ``indexing system''. 
In Section~\ref{sec:operads} we show that this again is equivalent to a subgraph of the lattice of subgroups satisfying two rules. 
Such a description appears under the name of {\em transfer systems} in~\cite{rubin2}. This opens the door to a more combinatorial approach to studying these operads for a fixed group $G$, which sets the stage for this current paper.

We start with the case of $G$ being a cyclic group $C_{p^n}$ in Section~\ref{sec:primeorder}. A constructive approach leads to our first result that there are $\mathsf{Cat}(n+1)$ many $N_\infty$-operads for $C_{p^{n}}$, where $\mathsf{Cat}(n)$ denotes the $n^{th}$ Catalan number. In particular, there are as many $N_\infty$-operads for $C_{p^n}$ as there are binary trees with $n+2$ leaves.

The relation does not just stop there, though. Binary trees are one way of encoding associahedra (also known as \emph{Tamari lattices} or \emph{Stasheff polytopes}), where a binary tree corresponds to a vertex, and two vertices are related by a directed edge if one tree can be obtained from another by moving a branch to the right. On the other side, the set of all $N_\infty$-operads for $C_{p^n}$ can be ordered by inclusion of the corresponding graphs. We prove that these two posets are in fact isomorphic as posets, i.e., there is an isomorphism between $N_\infty$-operads and binary trees which is order-preserving and order-reflecting.

When moving to a general cyclic group, unfortunately one will quickly find the combinatorics of the $N_\infty$-operads unmanageable. This is due to the fact that in the corresponding graph diagram of an $N_\infty$-operad for $C_{p^{n_1}_1 \cdots p^{n_k}_k}$, the edges not induced from the $C_{p^i}$ become hard to describe. We explain this phenomenon in Section~\ref{sec:generalising} by developing the terms of \emph{pure} and \emph{mixed} $N_\infty$-operads and give a non-trivial lower bound for the number of $N_\infty$-operads for an arbitrary finite cyclic group $G$.

This new approach of $N_\infty$-operads as graph diagrams sheds light on the mysterious nature of the theory of equivariant homotopy commutativity. In particular, we have provided a finite and therefore computable approach to an involved problem.

\subsection*{Acknowledgements}\label{ackref}
We are very grateful for support and hospitality from the University of Kent and from the Isaac Newton Institute for Mathematical Sciences during the programme ``Homotopy harnessing higher structures", which was supported by EPSRC grant EP/R014604/1. We furthermore thank Anna Marie Bohmann and Magdalena K\k{e}dziorek for helpful discussions as well as Jonathan Rubin for insightful comments on an earlier version of this paper.

\section{A brief tour of the theory of \texorpdfstring{$N_\infty$}{N-infinity}-operads}\label{sec:operads}

We shall assume that the reader is somewhat familiar with $G$-equivariant homotopy theory in the sense of~May~\cite{MR1413302}. We shall assume that $G$ is a finite group throughout. Our objects of interest, $N_\infty$-operads, are a special class of $G$-operad, whence we begin our exposition.

\begin{defn}
A \emph{$G$-operad} $\mathcal{O}$ is a symmetric operad in $G$-spaces. That is, we have a sequence of $(G \times \Sigma_n)$-spaces $\mathcal{O}_n$ for all $n \geqslant 0$ such that
\begin{enumerate}[align=left]
\item there is a $G$-fixed identity element $1 \in \mathcal{O}_1$,
\item there are $G$-equivariant composition maps
$$\mathcal{O}_k \times \mathcal{O}_{n_1} \times \cdots \times \mathcal{O}_{n_k} \to \mathcal{O}_{n_1+\cdots + n_k}$$
which satisfy the usual compatibility conditions with each other and the symmetric group actions.
\end{enumerate}
\end{defn}

A certain subclass of $G$-operads, known as $N_\infty$-operads, is used to describe different levels of homotopy commutativity in genuine $G$-equivariant stable homotopy theory, see Blumberg and Hill~\cite{MR3406512}. That is, they are a generalization of $E_\infty$-operads to the equivariant setting. Recall that for a group $G$ a \emph{family} $\mathcal{F}$ is a collection of subgroups which is closed under passage to subgroups and conjugacy. A \emph{universal space} for a family $\mathcal{F}$ is a $G$-space $E\mathcal{F}$ such that for all subgroups $H$ we have
  $$
    (E\mathcal{F})^H \simeq \left\{\begin{array}{lr}
        \ast & H \in \mathcal{F} \\
        \emptyset & H \not\in \mathcal{F}
        \end{array}\right\}.
  $$

\begin{defn}\label{def:op}
An \emph{$N_\infty$-operad} is a $G$-operad $\mathcal{O}$ such that
\begin{enumerate}[align=left]
\item the space $\mathcal{O}_0$ is $G$-contractible,
\item the action of $\Sigma_n$ on $\mathcal{O}_n$ is free,
\item $\mathcal{O}_n$ is a universal space for a family $\mathcal{F}_n(\mathcal{O})$ of subgroups of $G \times \Sigma_n$ which contains all subgroups of the form $H \times \{1\}$ for $H \leqslant G$.
\end{enumerate}
We will denote by $N_\infty(G)$ the collection of all (homotopy classes of) $N_\infty$-operads for a given group $G$.
\end{defn}

Although Definition~\ref{def:op} is perfectly good for theoretical purposes, 
we need a more computationally exploitable definition of $N_\infty$-operads. 
We first introduce the intermediary notion of indexing systems.

\begin{defn}
A \emph{categorical coefficient system} is a contravariant functor $\underline{\mathcal{C}} \colon O_G^{op} \to \textbf{Cat}$ from the orbit category of $G$ to the category of small categories. Such a coefficient system is called \emph{symmetric monoidal} if it takes values in symmetric monoidal categories and strong monoidal functors. We are particularly interested in the coefficient system $\mathcal{S} et$ with disjoint union which sends a subgroup $H$ to the category $\textbf{Set}^H$ of $H$-sets, where $\textbf{Set}$ denotes the category of finite sets. A sub-symmetric coefficient $\underline{\mathcal{C}}$ of $\mathcal{S} et$ is said to be an \emph{indexing system} if it is closed under sub-objects and self-induction (i.e., $T \in \underline{\mathcal{C}}(K) $ and $H/K \in \underline{\mathcal{C}}(H)$ implies that $H \times_K T \in \underline{\mathcal{C}}(H)$).
\end{defn}

The following result was first conjectured in Blumberg and Hill~\cite{MR3406512} and has subsequently been proven to hold in three independent articles by Bonventre and Pereira; Guti\'{e}rrez and White; and Rubin. The result uses the existence of a model structure on the category of  $N_\infty$-operads whose weak equivalences are those maps which (at level $n$) induce weak homotopy equivalences after taking $\Gamma$-fixed points for all $\Gamma \subseteq G \times \Sigma_n$.

\begin{prop}[{\cite{bonventre,MR3848404,rubin}}]
The homotopy category of $N_\infty$-operads is equivalent to the poset category of indexing systems.
\end{prop}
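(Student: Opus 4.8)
The plan is to construct a functor from $N_\infty$-operads to indexing systems by extracting, for each operad $\mathcal{O}$, the combinatorial data recorded in its families $\mathcal{F}_n(\mathcal{O})$, and then to show that this functor descends to an equivalence on homotopy categories. The first step is to identify which subgroups $\Gamma \leqslant G \times \Sigma_n$ can lie in $\mathcal{F}_n(\mathcal{O})$. Because the $\Sigma_n$-action on $\mathcal{O}_n$ is free (Definition~\ref{def:op}(2)), any $\Gamma$ with $\mathcal{O}_n^\Gamma \neq \emptyset$ must intersect $\{1\} \times \Sigma_n$ trivially, and hence is a \emph{graph subgroup} $\Gamma_\phi = \{(h, \phi(h)) : h \in H\}$ for some $H \leqslant G$ and homomorphism $\phi \colon H \to \Sigma_n$. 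Such a $\phi$ is precisely the datum of an $H$-set structure on an $n$-element set, and conjugacy of graph subgroups corresponds to isomorphism of $H$-sets. I would therefore define $\underline{\mathcal{C}}_{\mathcal{O}}(H)$ to be the full subcategory of $\textbf{Set}^H$ on those $H$-sets $T$ whose associated graph subgroup lies in $\mathcal{F}_{|T|}(\mathcal{O})$.

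The second step is to verify that $\underline{\mathcal{C}}_{\mathcal{O}}$ is genuinely an indexing system. The family axioms for $\mathcal{F}_n$ (closure under subgroups and conjugation) should translate directly into closure under subobjects and restriction along subgroup inclusions, while the operadic composition maps of Definition~\ref{def:op} furnish closure under self-induction and Cartesian products: composing operations indexed by admissible sets produces a new admissible set, and tracing this through the universal-space characterisation yields exactly the condition $H \times_K T \in \underline{\mathcal{C}}(H)$. The containment of all $H \times \{1\}$ in $\mathcal{F}_n$ guarantees that trivial $H$-sets are always admissible, so $\underline{\mathcal{C}}_{\mathcal{O}}$ is a sub-coefficient system of $\mathcal{S}et$ of the required form.

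To see that the functor descends to the homotopy category and is fully faithful, I would use the universal-space property (Definition~\ref{def:op}(3)) together with the model structure whose weak equivalences are the $\Gamma$-fixed-point equivalences at each level. Since $\mathcal{O}_n$ and $\mathcal{O}'_n$ are both universal spaces, a $(G \times \Sigma_n)$-map $\mathcal{O}_n \to \mathcal{O}'_n$ exists and is unique up to homotopy exactly when $\mathcal{F}_n(\mathcal{O}) \subseteq \mathcal{F}_n(\mathcal{O}')$; assembling these over all $n$ shows that the $\mathrm{Hom}$-sets in the homotopy category are singletons precisely when the associated indexing systems are nested, so the homotopy category is a poset isomorphic to its image under inclusion. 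In particular a weak equivalence forces equality of families, hence of indexing systems.

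The genuine obstacle, and the essential-surjectivity half of the equivalence, is the \emph{realisation} of an arbitrary indexing system by an actual $N_\infty$-operad. Given $\underline{\mathcal{C}}$, one defines candidate families $\mathcal{F}_n$ from its admissible $H$-sets and takes $\mathcal{O}_n$ to be a suitable model of the universal space $E\mathcal{F}_n$; the difficulty is to endow the collection $\{\mathcal{O}_n\}$ with coherent, equivariant operadic composition maps. By the universal property these maps exist if and only if the families are compatible under the operadic grafting of subgroups, and verifying that the indexing-system axioms supply exactly this compatibility --- together with producing cofibrant point-set models (via bar constructions, categorical classifying spaces, or Steiner-type operads) on which the structure maps can be rigidified --- is the technical heart of the argument. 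This is precisely the step carried out by the three cited references within their respective frameworks.
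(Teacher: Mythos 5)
The paper offers no proof of this proposition at all: it is imported wholesale from the three cited references (this is the Blumberg--Hill conjecture, resolved independently by Bonventre--Pereira, Guti\'errez--White, and Rubin), so there is no in-paper argument to compare yours against. Judged on its own terms, your sketch correctly reconstructs the standard architecture from that literature: the free $\Sigma_n$-action forcing all isotropy in $\mathcal{F}_n(\mathcal{O})$ to be graph subgroups $\Gamma_\phi$, the translation of graph subgroups into admissible $H$-sets, the verification that the family and composition axioms yield the indexing-system axioms, full faithfulness via the universal-space property, and --- crucially --- the correct identification that essential surjectivity (realising an arbitrary indexing system by an actual operad) is the deep step, which you defer to exactly the references the paper cites. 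That division of labour matches how the result actually lives in the literature. One point in your full-faithfulness paragraph deserves more care: a compatible family of levelwise $(G\times\Sigma_n)$-maps does not automatically assemble into a map of \emph{operads}, since the maps must also commute with the composition structure; the standard repair is the product trick --- when $\mathcal{F}_n(\mathcal{O}) \subseteq \mathcal{F}_n(\mathcal{O}')$ for all $n$, the projection $\mathcal{O} \times \mathcal{O}' \to \mathcal{O}$ is a levelwise equivalence of operads (each $\mathcal{O}_n \times \mathcal{O}'_n$ is a universal space for the intersection family), so one inverts it in the homotopy category and composes with the other projection. With that amendment your outline is a faithful summary of the proof the paper takes on faith.
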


An algebra for an $N_\infty$-operad has structure above and beyond being a $G$-spectrum 
whose underlying non-equivariant spectrum is an $E_\infty$-algebra. This additional structure was 
fundamental to Hill, Hopkins and Ravenel~\cite{MR3505179}.
 
\begin{thm}[{\cite[Theorems 4.13 and 4.14]{MR3773736}}]\label{thm:normsandoperads}
Let $G$ be a finite group and $\mathcal{O}$ an $N_\infty$-operad. 
If $R$ is an $\mathcal{O}$-algebra in $G$-spectra, then the 
Mackey functor $\underline{\pi}_0(R)$ is an $\mathcal{O}$-Tambara functor. 
\end{thm}

That is, each $\underline{\pi}_0(R)(G/H) \coloneqq \pi_0^H(R)$ is a commutative ring, 
the restriction maps are monoidal and the transfer maps satisfy the Frobenius relations. 
Furthermore, if $H/K \in \underline{\mathcal{C}}(H)$, for 
$\underline{\mathcal{C}}$ the indexing system determined by $\mathcal{O}$,
there is a multiplicative (but not usually additive) norm map 
\[
N_K^H \colon \pi_0^K(R) \longrightarrow \pi_0^H(R).
\]
The norm maps satisfy Frobenius-type relations describing their interaction with 
addition and the restriction and transfer maps. 
An $\mathcal{O}$-Tambara functor is also known as an \emph{incomplete Tambara functor}. 
In particular, the maps $N_H^H$ are always present and are the identity maps.
If one has a norm map for each pair of subgroups $K \leqslant H$ of $G$, then the homotopy groups 
of an $\mathcal{O}$-algebra are a Tambara functor in the original sense of 
\cite{MR1209937}.

We now compare the notion of indexing systems to \emph{transfer systems} from~\cite{rubin2}. This notion was also independently discovered by the authors.

\begin{lemma}[{\cite[\S 6]{rubin2}}]
An indexing system determines, and is determined by, a set $\mathcal{F}_H$ for each $H \leqslant G$ consisting of subgroups $K$ of $H$, written
as $H/K$, satisfying the following axioms
\begin{itemize}[align=left]
\item[(Identity)] $H/H \in \mathcal{F}_H$.
\item[(Conjugation)] $H/K \in \mathcal{F}_H$ implies $gHg^{-1} / gKg^{-1} \in \mathcal{F}_{gH g^{-1}}$.
\item[(Restriction)] $H/K \in \mathcal{F}_H$ implies  $M/(M \cap K) \in \mathcal{F}_M$ for all $M \leqslant H$.
\item[(Composition)] $H/K \in \mathcal{F}_H$ and $K/L \in \mathcal{F}_K$ implies $H/L \in \mathcal{F}_H$.
\end{itemize}
We call this data a \emph{transfer system}.
\end{lemma}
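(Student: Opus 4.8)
The plan is to exhibit an explicit bijection by restricting attention to orbits, since every finite $H$-set decomposes as a disjoint union of transitive ones. Given an indexing system $\underline{\mathcal{C}}$, I would set
$$\mathcal{F}_H = \{ K \leqslant H : H/K \in \underline{\mathcal{C}}(H)\},$$
where $H/K$ denotes the orbit regarded as an $H$-set. The first observation to record is that $\underline{\mathcal{C}}$ is completely recovered from the collection $\{\mathcal{F}_H\}$: because $\underline{\mathcal{C}}(H)$ is closed under finite disjoint unions and under subobjects, an $H$-set lies in $\underline{\mathcal{C}}(H)$ if and only if each of its orbits does. Thus passing to orbits loses no information, and the content of the lemma is that the closure conditions defining an indexing system translate precisely into the four transfer-system axioms.

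Next I would verify the four axioms in turn. Identity holds because an indexing system contains the trivial $H$-sets by convention, so the one-point set $H/H$ is always admissible. Conjugation comes from functoriality of $\underline{\mathcal{C}}$ along the isomorphism $G/H \xrightarrow{\sim} G/gHg^{-1}$ in the orbit category $O_G$, which carries the orbit $H/K$ to $gHg^{-1}/gKg^{-1}$. For Composition I would use self-induction with $T = K/L$: since $H \times_K (K/L) \cong H/L$ as $H$-sets, the hypotheses $H/K \in \underline{\mathcal{C}}(H)$ and $K/L \in \underline{\mathcal{C}}(K)$ give $H/L \in \underline{\mathcal{C}}(H)$, that is, $H/L \in \mathcal{F}_H$. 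Restriction is the one axiom requiring the double coset formula: the restriction of $H/K$ along $M \leqslant H$ decomposes as $\coprod_{MhK} M/(M \cap hKh^{-1})$, and taking $h = e$ exhibits $M/(M\cap K)$ as an orbit of $\mathrm{Res}^H_M(H/K)$; closure of the sub-coefficient system under restriction and under subobjects then yields $M/(M \cap K) \in \mathcal{F}_M$.

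For the converse I would, given a transfer system $\{\mathcal{F}_H\}$, define $\underline{\mathcal{C}}(H)$ to be the full subcategory of $\textbf{Set}^H$ on those $H$-sets all of whose orbits are isomorphic to some $H/K$ with $K \in \mathcal{F}_H$, and check that this is an indexing system. Closure under disjoint unions and subobjects is immediate from the orbit description. Closure under restriction reduces, via the same double coset decomposition, to showing $M \cap hKh^{-1} \in \mathcal{F}_M$ for $K \in \mathcal{F}_H$ and $h \in H$; here Conjugation (with $g = h$, so that $hHh^{-1} = H$) gives $hKh^{-1} \in \mathcal{F}_H$, and then Restriction gives the claim. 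Self-induction reduces to $H \times_K (K/L) \cong H/L$ and is exactly the Composition axiom. Finally, the two constructions are mutually inverse by the orbit-recovery observation of the first paragraph.

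The main obstacle is the bookkeeping around Restriction: unlike the other three axioms, restricting an orbit does not return an orbit but a disjoint union governed by the double coset formula, so one must carefully combine Conjugation, Restriction and subobject-closure to isolate the single stated orbit $M/(M\cap K)$ in one direction, and conversely to verify that the subcategory built from a transfer system really is closed under restriction. Everything else is a direct translation of the closure properties through the decomposition of finite $G$-sets into orbits.
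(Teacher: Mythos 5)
Your proof is correct, but note that there is no in-paper argument to compare it against: the paper states this lemma as imported from Rubin~\cite[\S 6]{rubin2} and gives no proof. Your argument is essentially the standard one from the cited source: detect membership on orbits (using closure under subobjects in one direction and closure under finite disjoint unions, i.e.\ the symmetric monoidal structure, in the other), translate Composition/self-induction through the isomorphism $H \times_K (K/L) \cong H/L$, get Conjugation from functoriality along isomorphisms in the orbit category, and handle Restriction — the only axiom where orbits do not map to orbits — via the double coset decomposition of $\mathrm{Res}^H_M(H/K)$, combining Conjugation by elements of $H$ with subobject-closure in the converse direction. All of these steps are sound, including the mutual-inverse check. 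The one point worth flagging is your treatment of Identity: you invoke the convention that indexing systems contain all trivial sets. That requirement is part of Blumberg--Hill's and Rubin's definitions but is elided in this paper's abbreviated definition (a sub-symmetric monoidal coefficient system of $\mathcal{S}et$ closed under subobjects and self-induction necessarily contains the monoidal unit $\emptyset$, but nothing in the paper's wording forces $H/H$ to be present). So your appeal to the convention is the correct repair of a gap that sits in the paper's definition rather than in your proof.
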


We rewrite this definition into a form more directly useful for our purposes.
\begin{defn}
Given a transfer system $\mathcal{F}_H$, define a set (of abstract symbols)
\[
\{
N_K^H \mid H/K \in \mathcal{F}_H, \ \ H \leqslant G
\}
\]
and call the symbol $N_K^H$ a \emph{norm map} from $K$ to $H$. 
\end{defn}

\begin{cor}\label{operad-struc}
Let $G$ be a finite group.
Up to homotopy, an $N_\infty$-operad for $G$ is the data of a set of
norm maps $X = \{ N_K^H \}_{1 \leqslant K < H \leqslant G}$ satisfying the following rules (and all conjugates thereof).
\begin{itemize}[align=left]
\item[(Restriction)] If $N_K^H \in X$ and $M<H$, then $N_{K \cap M}^M \in X$.
\item[(Composition)] If $N_L^K \in X$ and $N_K^H \in X$, then $N_L^H \in X$.
\end{itemize}
In particular, homotopy classes of $N_\infty$-operads can be described as certain subgraphs of the lattice of subgroups of $G$.
\end{cor}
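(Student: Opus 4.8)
The plan is to obtain the statement as a direct repackaging of the two preceding results. By the Proposition, the homotopy category of $N_\infty$-operads is equivalent to the poset of indexing systems, and by the Lemma each indexing system is exactly the data of a transfer system, namely a choice of sets $\mathcal{F}_H$ of admissible quotients $H/K$ (with $K \leqslant H$) for every $H \leqslant G$, subject to the four axioms (Identity), (Conjugation), (Restriction) and (Composition). Thus it suffices to show that the datum of a transfer system is equivalent to the datum of a set of norm maps $X$ satisfying the two displayed rules.

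First I would fix the dictionary translating between the two descriptions. To the admissible quotient $H/K \in \mathcal{F}_H$ I associate the norm map $N_K^H$, whose source is the smaller group $K$ and whose target is the larger group $H$; conversely a set $X$ of norm maps determines sets $\mathcal{F}_H = \{\, H/K : N_K^H \in X \,\}$. Under this correspondence the (Identity) axiom $H/H \in \mathcal{F}_H$ becomes the statement that the trivial norm $N_H^H$ is always present; since these carry no information they are simply omitted from $X$, which is why the indexing set runs only over strict inclusions $1 \leqslant H < K \leqslant G$. Likewise the (Conjugation) axiom is absorbed into the phrase ``and all conjugates thereof'', so that $X$ need only record one representative of each conjugacy class of norms.

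It then remains to check that the two remaining transfer-system axioms translate verbatim into the two stated rules. I would verify that (Restriction), $H/K \in \mathcal{F}_H \Rightarrow M/(M \cap K) \in \mathcal{F}_M$ for all $M \leqslant H$, becomes exactly ``$N_K^H \in X$ and $M < H$ imply $N_{K \cap M}^M \in X$'' (the case $M = H$ being vacuous), and that (Composition), $H/K \in \mathcal{F}_H$ together with $K/L \in \mathcal{F}_K$ forcing $H/L \in \mathcal{F}_H$, becomes ``$N_L^K \in X$ and $N_K^H \in X$ imply $N_L^H \in X$''. Because the dictionary is a bijection on the underlying data, the two axiom systems carve out precisely the same collection of subsets, which yields the claimed description.

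Finally, the concluding sentence is immediate once $X$ is viewed as a set of pairs $(H,K)$ with $H < K$: such a set is nothing but a collection of directed edges on the vertex set of subgroups of $G$, that is, a subgraph of the digraph of strict inclusions, and the (Restriction) and (Composition) rules single out which subgraphs may occur. The only genuine work is the bookkeeping in the dictionary — keeping the source and target of each norm straight, confirming that suppressing identities and conjugates loses no information, and checking that the intersection $M \cap K$ and the quantifier over $M$ survive the translation intact; I expect this index-chasing, rather than any conceptual difficulty, to be the main obstacle.
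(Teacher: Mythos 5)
Your proposal is correct and follows essentially the same route as the paper: both pass through the equivalence with indexing systems and transfer systems, then translate the (Restriction) and (Composition) axioms verbatim into rules on norm maps $N_K^H$ via the dictionary $H/K \in \mathcal{F}_H \leftrightarrow N_K^H$, with identities and conjugates handled exactly as you describe. Your write-up is in fact more explicit than the paper's terse proof, but there is no substantive difference in the argument.
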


\begin{proof}
Up to homotopy, an $N_\infty$-operad $\mathcal{O}$ uniquely specifies a transfer system $\mathcal{F}$.
We know that $H/K \in \mathcal{F}_H$ implies
\[
M/(M \cap K) \in \mathcal{F}_M.
\]
In terms of norm maps this is precisely the restriction rule.
The second axiom of a transfer system says that
$H/K \in \mathcal{F}_H$ and $K/L \in \mathcal{F}_K$ implies $H/L \in \mathcal{F}_H$.
In terms of norm maps this is precisely the composition rule.

For the converse, we construct a transfer system from a set of norm maps $X$ satisfying the two axioms
of the statement. We define $\mathcal{F}_H$ as the set of $H/K$ such that $N_K^H \in X$ and $H/H$. 
One can check this defines a transfer system and hence a homotopy class of 
$N_\infty$-operads.
\end{proof}

\begin{remark}
Our notation has been chosen so that the Mackey functor of an algebra $R$ over an $N_\infty$-operad $\mathcal{O}$ has 
multiplicative norm maps $N_K^H \colon \pi_0^K(R) \longrightarrow \pi_0^H(R)$ 
whenever $N_K^H$ is in the (abstract) set of norm maps. 
\end{remark}

\begin{remark}\label{rem:comp}
Given a set of norm maps $\{N_K^H\}$ which do not satisfy the rules of  Corollary~\ref{operad-struc}, then one can find a minimal set $X$ containing $\{N_K^H\}$ which does satisfy the rules. We refer to this as the \emph{completion} of the set $\{N_K^H\}$ to a transfer system $X$. See also~\cite[Theorem A.2]{rubin2}.

%This means that for a given set $\{N_K^H\}$ not necessarily satisfying the rules of \ref{operad-struc}, there is a set $X$ containing $\{N_K^H\}$ and satisfying restriction and composition. We refer %to this as \emph{completing} the set $\{N_K^H\}$ to a transfer system $X$, see also \cite[Theorem A.2]{rubin2}.
\end{remark}

This results leads to the following corollary, which motivates the results in this paper, namely, that for a finite group $G$, it makes sense to attempt to enumerate the number of $N_\infty$-operad structures, and to understand the associated poset structure.

\begin{cor}
Let $G$ be a finite group. Then the number of $N_\infty$-operads for $G$ is finite. Moreover, the set $N_\infty(G)$ admits a canonical poset structure given by inclusions of sets of the corresponding transfer systems.
\end{cor}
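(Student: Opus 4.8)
The plan is to deduce both assertions directly from Corollary~\ref{operad-struc}, which realises each $N_\infty$-operad, up to homotopy, as a transfer system: a subset $X$ of the collection of norm maps $\{N_H^K\}_{1 \leqslant H < K \leqslant G}$ closed under the restriction and composition rules. Everything then reduces to elementary bookkeeping about this correspondence together with the finiteness of the subgroup lattice of $G$.

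For the finiteness assertion, I would begin from the observation that a finite group $G$ has only finitely many subgroups, so its subgroup lattice is finite. It follows that there are only finitely many pairs $(H,K)$ with $1 \leqslant H < K \leqslant G$, and hence the ambient set of all candidate norm maps $\{N_H^K\}$ is finite. Every transfer system is by definition a subset of this fixed finite set, so the collection of all transfer systems embeds into a finite power set and is therefore itself finite. Combining this with the identification of Corollary~\ref{operad-struc} (and the preceding equivalence between the homotopy category of $N_\infty$-operads and the poset of indexing systems), the set $N_\infty(G)$ is finite.

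For the poset structure, the key point is that each element of $N_\infty(G)$ is canonically a subset of the single fixed set of norm maps, so the relation of set-theoretic containment on these subsets transfers verbatim to $N_\infty(G)$. Inclusion of sets is reflexive, antisymmetric, and transitive, hence a partial order, and restricting a partial order to any subcollection---here, to the transfer systems---again yields a partial order. Since the passage from an $N_\infty$-operad to its transfer system depends only on $G$ and involves no auxiliary choices, the resulting order is canonical.

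I anticipate no genuine obstacle: the statement is essentially a formal consequence of the correspondence established in Corollary~\ref{operad-struc}. The only matter deserving a line of care is to confirm that inclusion restricts correctly to the subcollection of transfer systems rather than to the full power set of norm maps, but this is automatic, since any partial order restricts to a partial order on any subset.
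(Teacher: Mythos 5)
Your proposal is correct and follows exactly the route the paper intends: the paper states this corollary without proof as an immediate consequence of Corollary~\ref{operad-struc}, and your argument (finitely many subgroups, hence finitely many candidate norm maps, hence finitely many transfer systems, with set inclusion supplying the canonical partial order) is precisely the bookkeeping the authors leave implicit. No gaps.
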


\section{The case \texorpdfstring{$G = C_{p^n}$}{G=Cpn}}\label{sec:primeorder}

We will begin with the case of cyclic groups of the form $C_{p^n}$. We note that the choice of $p$ here is arbitrary as the subgroup lattices of $C_{p^n}$ and $C_{q^n}$ are both isomorphic to the poset $\underline{n} = \{0 < 1 < \dots < n\}$. To ease the notation we shall denote by $N_i^j$ the norm map $N_{C_{p^i}}^{C_{p^j}}$ for $i \leqslant j$.

Before we continue to the theoretics, let us manually compute the first handful of values of $|N_\infty(C_{p^n})|$. The purpose of this is two-fold. Firstly it will give the reader an idea of how such computations are done, and second, for the avid integer sequence fan, these examples will suggest the general form for the sequence $\{|N_\infty(C_{p^n})|\}_{n \in \mathbb{N}}$. Note that we will not write the identity norm maps $N_i^i$ and shall only consider the non--trivial norm maps.

\begin{example}
The case of $G = C_{p^0}$ is trivial. That is, there are no choices of non-trivial indexing systems to make, and therefore $|N_{\infty}(C_{p^0})| = 1$. This is exactly the fact that for non-equivariant stable homotopy theory, there is only a single notion of commutativity as one may expect. We will write the single (up to homotopy)
$N_\infty$-operad structure as $\{\emptyset\}$ to indicate that there are no non-trivial norm maps.
\end{example}

\begin{example}
The situation for $G = C_p$ is only marginally more involved than the trivial case. Here we have a subgroup lattice $\{C_{p^0} < C_{p^1}\}$. Therefore the only choice to make is if we wish to include the non-trivial norm map $N_0^1$ or not. Therefore there are two $N_\infty$-operad structures (up to homotopy), namely $\{\emptyset\}$ and $\{N_0^1\}$.
\end{example}

\begin{example}
We shall now look at $G=C_{p^2}$. This is the first case where we need to take care of the rules appearing in Corollary~\ref{operad-struc}. As always, we have the trivial $N_\infty$-operad $\{\emptyset\}$ which we shall write diagrammatically as

\vspace{5mm}

\begin{figure}[h!]
\centering
\begin{tikzpicture}
\node (0) at (2,0) {$C_{p^0}$};
\node (1) at (4,0) {$C_{p^1}$};
\node (2) at (6,0) {$C_{p^2}$};
\node at (1.5,0) {\Huge{(}};
\node at (6.5,0) {\Huge{)}};
\node at (7.4,0) {$=$};
\node[label=right:{$\{\emptyset\}$ .}] at (8,0) {};
\end{tikzpicture}
\end{figure}

\vspace{5mm}

At the other extreme, we could add in all of the norm maps. One can easily check the conditions to see that this will always be a valid $N_\infty$-operad. We shall display this $N_\infty$-operad as

\vspace{5mm}

\begin{figure}[h!]
  \begin{tikzpicture}[->, node distance=2cm, auto]
\node (3) at (6.000000,0) {$C_{p^2}$};
\node (2) at (4.000000,0) {$C_{p^1}$};
\node (1) at (2.000000,0) {$C_{p^0}$};
\draw (1) to (2);
\draw (2) to (3);
\draw (1) edge[bend left=30, looseness=0.5, ->] (3);
\node at (1.5,0) {\Huge{(}};
\node at (6.5,0) {\Huge{)}};
\node at (7.4,0) {$=$};
 \node[label=right:{$\{N_{0}^{1},N_1^2,N_0^2\}$}] at (8.0,0) {};
 \end{tikzpicture}
 \end{figure}

\vspace{5mm}

\noindent where an arrow from $C_{p^{i}}$ to $C_{p^{j}}$ indicates the existence of the norm map $N_i^j$ for $i < j$.

 The technical part then, of course, is to identify what other $N_\infty$-operads can appear in-between these two extremes. There are $2^3$ different possibilities to try (indeed, there are three different norm maps which we much choose whether to include or not). Instead of investigating all of the remaining cases, we shall just show the failure of the ones that do not have an $N_\infty$-operad structure. Figures~\ref{inv1}, \ref{inv2} and \ref{inv3} give the invalid diagrams.

 \begin{figure}[H]
  \begin{tikzpicture}[->, node distance=2cm, auto,scale=.9]
\node (3) at (6.000000,0) {$C_{p^2}$};
\node (2) at (4.000000,0) {$C_{p^1}$};
\node (1) at (2.000000,0) {$C_{p^0}$};
\draw (1) to (2);
\draw (2) to (3);
 \node[label=right:{$\{N_{0}^{1},N_1^2\}$}] at (8,-0.0) {};
 \node[label=\textcolor{red}{\xmark}] at (0,-0.4) {};
 \node at (1.5,0) {\Huge{(}};
\node at (6.5,0) {\Huge{)}};
\node at (7.4,0) {$=$};
 \end{tikzpicture}
 \caption{This diagram is not valid as it violates the composition rule of Corollary~\ref{operad-struc}.  If we were to complete this diagram in the sense of Remark~\ref{rem:comp} to get a valid $N_\infty$-operad then we would need to add in the norm map $N_0^2$, and we get the previous operad.}\label{inv1}
 \end{figure}

 \begin{figure}[h!]
  \begin{tikzpicture}[->, node distance=2cm, auto,scale=.9]
\node (3) at (6.000000,0) {$C_{p^2}$};
\node (2) at (4.000000,0) {$C_{p^1}$};
\node (1) at (2.000000,0) {$C_{p^0}$};
\draw (1) edge[bend left=30, looseness=0.5, ->] (3);
\node at (1.5,0) {\Huge{(}};
\node at (6.5,0) {\Huge{)}};
\node at (7.4,0) {$=$};
 \node[label=right:{$\{N_0^2\}$}] at (8,0) {};
  \node[label=\textcolor{red}{\xmark}] at (0,-0.4) {};
 \end{tikzpicture}
 \caption{This diagram is not valid as it does not satisfy the restriction rules.  To satisfy the rule we would need to also have the norm map $N_0^1$, and then all of the rules would be satisfied. The resulting operad would be different from the above two.}\label{inv2}
 \end{figure}

 \begin{figure}[h!]
  \begin{tikzpicture}[->, node distance=2cm, auto,scale=.9]]
\node (3) at (6.000000,0) {$C_{p^2}$};
\node (2) at (4.000000,0) {$C_{p^1}$};
\node (1) at (2.000000,0) {$C_{p^0}$};
\draw (2) to (3);
\draw (1) edge[bend left=30, looseness=0.5, ->] (3);
 \node[label=right:{$\{N_1^2,N_0^2\}$}] at (8,-0) {};
 \node[label=\textcolor{red}{\xmark}] at (0,-0.4) {};
 \node at (1.5,0) {\Huge{(}};
\node at (6.5,0) {\Huge{)}};
\node at (7.4,0) {$=$};
 \end{tikzpicture}
 \caption{This is the final invalid diagram, which suffers from the same deficiency as the one above, that is, it does not satisfy the restriction rules.} \label{inv3}
 \end{figure}

Consequently, we can write down the elements of $N_\infty(C_{p^2})$. Note that in particular, $|N_\infty(C_{p^2})| = 5$. We implore the reader to check these for themselves to gain confidence with the rules of Corollary~\ref{operad-struc}. The valid $N_\infty$-operad structures are as follows.

\begin{figure}[h!]
\centering
 \begin{tikzpicture}[->, node distance=2cm, auto,scale=.9]
\node (3) at (6.000000,0) {$C_{p^2}$};
\node (2) at (4.000000,0) {$C_{p^1}$};
\node (1) at (2.000000,0) {$C_{p^0}$};
 \node[label=right:{$\{\emptyset\}$}] at (8,-0) {};
 \node[label=\textcolor{Green}{\cmark}] at (0,-0.4) {};
 \node at (1.5,0) {\Huge{(}};
\node at (6.5,0) {\Huge{)}};
\node at (7.4,0) {$=$};
 \end{tikzpicture}
\end{figure}

\begin{figure}[h!]
 \begin{tikzpicture}[->, node distance=2cm, auto,scale=.9]
\node (3) at (6.000000,0) {$C_{p^2}$};
\node (2) at (4.000000,0) {$C_{p^1}$};
\node (1) at (2.000000,0) {$C_{p^0}$};
\draw (1) to (2);
 \node[label=right:{$\{N_{0}^{1}\}$}] at (8,-0) {};
 \node[label=\textcolor{Green}{\cmark}] at (0,-0.4) {};
 \node at (1.5,0) {\Huge{(}};
\node at (6.5,0) {\Huge{)}};
\node at (7.4,0) {$=$};
 \end{tikzpicture}
 \end{figure}

\begin{figure}[h!]
 \begin{tikzpicture}[->, node distance=2cm, auto,scale=.9]
\node (3) at (6.000000,0) {$C_{p^2}$};
\node (2) at (4.000000,0) {$C_{p^1}$};
\node (1) at (2.000000,0) {$C_{p^0}$};
\draw (1) to (2);
\draw (1) edge[bend left=20, looseness=0.5, ->] (3);
 \node[label=right:{$\{N_{0}^{1},N_0^2\}$}] at (8,-0) {};
 \node[label=\textcolor{Green}{\cmark}] at (0,-0.4) {};
 \node at (1.5,0) {\Huge{(}};
\node at (6.5,0) {\Huge{)}};
\node at (7.4,0) {$=$};
 \end{tikzpicture}
   \end{figure}

\begin{figure}[h!]
  \begin{tikzpicture}[->, node distance=2cm, auto,scale=.9]
\node (3) at (6.000000,0) {$C_{p^2}$};
\node (2) at (4.000000,0) {$C_{p^1}$};
\node (1) at (2.000000,0) {$C_{p^0}$};
\draw (2) to (3);
 \node[label=right:{$\{N_{1}^{2}\}$}] at (8,-0) {};
 \node[label=\textcolor{Green}{\cmark}] at (0,-0.4) {};
 \node at (1.5,0) {\Huge{(}};
\node at (6.5,0) {\Huge{)}};
\node at (7.4,0) {$=$};
 \end{tikzpicture}
  \end{figure}

\begin{figure}[h!]
  \begin{tikzpicture}[->, node distance=2cm, auto,scale=.9]
\node (3) at (6.000000,0) {$C_{p^2}$};
\node (2) at (4.000000,0) {$C_{p^1}$};
\node (1) at (2.000000,0) {$C_{p^0}$};
\draw (1) to (2);
\draw (2) to (3);
\draw (1) edge[bend left=20, looseness=0.5, ->] (3);
 \node[label=right:{$\{N_{0}^{1},N_1^2,N_0^2\}$}] at (8,-0) {};
 \node[label=\textcolor{Green}{\cmark}] at (0,-0.4) {};
 \node at (1.5,0) {\Huge{(}};
\node at (6.5,0) {\Huge{)}};
\node at (7.4,0) {$=$};
 \end{tikzpicture}
 \end{figure}
\end{example}

From our first analysis, we have obtained the integer sequence $1,2,5$ counting the number of $N_\infty$-operads for $C_{p^0}$, $C_{p^1}$ and $C_{p^2}$ respectively. If one were to take the time to check the possibilities for $C_{p^3}$, they would see that there are 14 possibilities. Therefore the examples suggest a relation to the Catalan numbers. The next section will be devoted to recalling the necessary results regarding the Catalan numbers before we prove the first main result, Theorem~\ref{mainthm}, which says that $|N_\infty(C_{p^n})|$ coincides with the $(n+1)$-st Catalan number.

\subsection{A recollection of the Catalan numbers}\label{sec:cat}

The Catalan numbers are a sequence of numbers which regularly appear in enumeration problems. The $n^{th}$ Catalan number, which we denote $\mathsf{Cat}(n)$, is given as
$$\mathsf{Cat}(n) = \dfrac{(2n)!}{(n+1)!n!} \rlap{ .}$$
The first few terms of the sequence are therefore $\mathsf{Cat}(0) = 1$, $\mathsf{Cat}(1) = 1$, $\mathsf{Cat}(2) = 2$, $\mathsf{Cat}(3) = 5$ and $\mathsf{Cat}(4)=14$. There are many surprising and amusing ways to define the Catalan numbers. Let us recall a few.
\begin{itemize}
\item $\mathsf{Cat}(n)$ is the number of valid expressions containing $n$ pairs of parentheses.
\item $\mathsf{Cat}(n)$ is the number of triangulations of a regular $(n+2)$-gon.
\item $\mathsf{Cat}(n)$ is the number of rooted binary trees with $n+1$ leaves.
\end{itemize}
This is but a few of a multitude of descriptions given in Stanley~\cite{MR1676282}. The last interpretation involving binary trees will be our canonical representation. Figure~\ref{trees} gives the corresponding binary trees in the case of $n=2$.

\vspace{5mm}

\begin{figure}[h!]
\centering
\begin{tikzpicture}[scale=0.5]
\node [fill,circle,draw,inner sep = 0pt, outer sep = 0pt, minimum size=2mm] (1) at (5,1) {};
\node [fill,circle,draw,inner sep = 0pt, outer sep = 0pt, minimum size=2mm] (3) at (6,0) {};
\draw (6,-1) edge node {} (3);
\draw (3) edge node {} (8,2);
\draw (1) edge node {} (4,2);
\draw (1) edge node {} (6,2);
\draw (1) edge node {} (3);

\node [fill,circle,draw,inner sep = 0pt, outer sep = 0pt, minimum size=2mm] (11) at (12,1) {};
\node [fill,circle,draw,inner sep = 0pt, outer sep = 0pt, minimum size=2mm] (33) at (11,0) {};
\draw (11,-1) edge node {} (33);
\draw (33) edge node {} (13,2);
\draw (33) edge node {} (9,2);
\draw (11) edge node {} (11,2);
\draw (11) edge node {} (33);
\end{tikzpicture}
\caption{The two binary trees giving the $2^{nd}$ Catalan number.}\label{trees}
\end{figure}

\vspace{5mm}

The following well-known recurrence relation will be fundamental to the proof of the main result in this section.

\begin{lemma}
The Catalan numbers satisfy, and are determined by, the recurrence relation
\begin{align*}
\mathsf{Cat}(0) &=1, \\
\mathsf{Cat}(n+1) &= \sum^n_{i=0} \mathsf{Cat}(i) \mathsf{Cat}(n-i) \text{ for } n \geqslant 0.
\end{align*}
\end{lemma}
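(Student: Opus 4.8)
The plan is to give a bijective proof using the rooted binary tree interpretation recalled above, since that is the representation the paper takes as canonical. First I would dispose of the easy half of the statement. The recurrence expresses $\mathsf{Cat}(n+1)$ entirely in terms of $\mathsf{Cat}(0), \dots, \mathsf{Cat}(n)$, so once the initial value $\mathsf{Cat}(0) = 1$ is fixed every subsequent term is forced by induction. Hence any sequence satisfying both the initial condition and the recurrence is uniquely determined, which settles the ``completely determined by'' clause.

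For the substantive half I would show that the closed-form sequence does satisfy the recurrence, by exhibiting a bijection on rooted binary trees. By the third interpretation, $\mathsf{Cat}(m)$ counts rooted binary trees with $m+1$ leaves; in particular $\mathsf{Cat}(0)=1$ corresponds to the single-leaf tree, which matches the base case. Now fix $n \geqslant 0$ and consider a rooted binary tree $T$ with $n+2$ leaves. Since $n+2 \geqslant 2$, the root of $T$ is an internal node with exactly two children, and these are the roots of a left subtree $T_L$ and a right subtree $T_R$, each itself a rooted binary tree. If $T_L$ has $i+1$ leaves, then $T_R$ has $(n+2)-(i+1) = (n-i)+1$ leaves; since each subtree carries at least one leaf, the index $i$ ranges over $0 \leqslant i \leqslant n$.

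The assignment $T \mapsto (T_L, T_R)$ is the bijection I would use: it sends the set of rooted binary trees with $n+2$ leaves onto the disjoint union, over $0 \leqslant i \leqslant n$, of pairs consisting of a tree with $i+1$ leaves and a tree with $(n-i)+1$ leaves. Indeed the decomposition is determined by $T$, and conversely any such ordered pair reassembles to a unique tree by attaching the two subtree roots to a fresh root. Counting the right-hand side gives $\sum_{i=0}^n \mathsf{Cat}(i)\mathsf{Cat}(n-i)$, which must therefore equal $\mathsf{Cat}(n+1)$.

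The only real care required is the bookkeeping of leaf counts so that the indices line up, and the verification that root-splitting genuinely is a bijection, i.e.\ that distinct ordered pairs of subtrees yield distinct trees and that no tree is omitted. An alternative I would keep in reserve is the generating-function argument: setting $C(x) = \sum_{n} \mathsf{Cat}(n) x^n$, the recurrence is equivalent to $C(x) = 1 + x\,C(x)^2$, whose solution $C(x) = (1 - \sqrt{1-4x})/2x$ expands via the binomial series to recover the closed formula. This route is cleaner algebraically but carries more formal-power-series overhead and sits less comfortably with the combinatorial spirit of the surrounding section, so I would present the bijective proof as the main argument.
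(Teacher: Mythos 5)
Your proof is correct, but there is nothing in the paper to measure it against: the paper states this lemma as a well-known fact (deferring to the standard literature, e.g.\ Stanley) and gives no proof at all. Your two halves are both sound. The uniqueness clause is, as you say, immediate by induction since the recurrence expresses each term through strictly earlier ones. The substantive half, the root-splitting bijection $T \mapsto (T_L, T_R)$ sending a rooted binary tree with $n+2$ leaves to the ordered pair of its subtrees with $i+1$ and $(n-i)+1$ leaves, is the classical argument; the one implicit dependency is that you identify $\mathsf{Cat}(m)$ with the number of rooted binary trees with $m+1$ leaves, which is the closed-formula-to-trees equivalence the paper cites rather than proves, so your argument inherits that citation. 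It is worth noting that your bijection is precisely the combinatorial shadow of what the paper does later: in the proof of Theorem~\ref{mainthm}, the decomposition of any $Z \in N_\infty(C_{p^{n}})$ as $X \odot Y$ with $X \in N_\infty(C_{p^{i-1}})$ and $Y \in N_\infty(C_{p^{n-i-1}})$ plays exactly the role of your root-splitting, and Corollary~\ref{tree-cor} grafts the two trees onto a fresh root exactly as you describe. So your argument is not only correct but matches the combinatorial spirit of the section better than the generating-function alternative you hold in reserve, which would verify the recurrence against the closed formula directly but at the cost of formal-power-series machinery the paper never introduces.
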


\subsection{An operation on \texorpdfstring{$N_\infty$}{N-infinity}-operads}

To facilitate the proof of Theorem \ref{mainthm}, we first introduce a binary operation
$$\odot \colon N_\infty(C_{p^i}) \times N_\infty(C_{p^j}) \to N_\infty(C_{p^{i+j+2}}).$$

To be able to define this function explicitly, we need some auxiliary notation. We consider an $X \in N_\infty(C_{p^i})$ as being described by its finite set of norm maps. Secondly, for formal reasons, we will fix the convention that $N_\infty(C_{p^{-1}})$ is defined to be the set containing only the empty set.

For $X \in N_\infty(C_{p^i})$ and $Y \in N_\infty(C_{p^j})$, we now define $X \odot Y \in N_\infty(C_{p^{i+j+2}})$ to be the $N_\infty$-operad described by the set of norm maps
$$X \odot Y \coloneqq  X \coprod \Sigma^{i+2} Y \coprod \{N_{i+1}^k\}_{i+1 < k \leq i+j+2}.$$
Here, $\Sigma$ is the following notation.
For a norm map $N_{k_1}^{k_2}$, we write $$\Sigma^n N_{k_1}^{k_2} \coloneqq  N_{{k_1+n}}^{{k_2+n}}$$
and let $\Sigma^n Y$ denote the set resulting from applying $\Sigma^n$ to each norm map in $Y$. The symbol $\Sigma$ is used to invoke the idea of a suspension or shift.

Figures~\ref{dotdia}, \ref{dotdia2} and \ref{dotdia3} give a pictorial presentation
of $X \odot Y$. We exclude the norm maps for $X$ and $Y$ from the diagrams for clarity. Note, that in particular, we can see that this operation is not commutative (nor is it associative). 

\begin{figure}[h!]
\centering
\begin{tikzpicture}[scale = 0.75]
\draw [black,fill=gGreen] (0,0) rectangle (5,0.5);
\draw [black] (5.75,0) rectangle (6.25,0.5);
\draw [black,fill=gRed] (7,0) rectangle (9,0.5);

\node [fill,circle,draw,inner sep = 0pt, outer sep = 0pt, minimum size=1.5mm] (from) at (6,.25) {};

\node [fill,circle,draw,inner sep = 0pt, outer sep = 0pt, minimum size=1.5mm] (to1) at (8.75,.25) {};
\node [fill,circle,draw,inner sep = 0pt, outer sep = 0pt, minimum size=1.5mm] (to2) at (8.5,.25) {};

\node [fill,circle,draw,inner sep = 0pt, outer sep = 0pt, minimum size=1.5mm] (to3) at (7.25,.25) {};
\node [fill,circle,draw,inner sep = 0pt, outer sep = 0pt, minimum size=1.5mm] (to4) at (7.5,.25) {};

\node [fill,circle,draw,inner sep = 0pt, outer sep = 0pt, minimum size=1.5mm] at (4.75,.25) {};
\node [fill,circle,draw,inner sep = 0pt, outer sep = 0pt, minimum size=1.5mm] at (4.5,.25) {};

\node [fill,circle,draw,inner sep = 0pt, outer sep = 0pt, minimum size=1.5mm] at (0.25,.25) {};
\node [fill,circle,draw,inner sep = 0pt, outer sep = 0pt, minimum size=1.5mm] at (0.5,.25) {};

\draw[->,bend left=80,thick] (from) to (to3);
\draw[->,bend left=80,thick] (from) to (to4);
\draw[->,bend left=80,thick] (from) to (to1);
\draw[->,bend left=80,thick] (from) to (to2);

\node at (8.025,0.25) {$\cdots$};
\node at (2.525,0.25) {$\cdots$};

\draw [decorate,decoration={brace,amplitude=10pt}]
(5.0,-0.15) -- (0.0,-0.15) node [black,midway,yshift=-0.6cm] {\footnotesize $X \in N_\infty(C_{p^i})$};

\draw [decorate,decoration={brace,amplitude=10pt}]
(9.0,-0.15) -- (7.0,-0.15) node [black,midway,yshift=-0.6cm] {\footnotesize $Y \in N_\infty(C_{p^j})$};

\draw [decorate,decoration={brace,amplitude=10pt}]
(9.0,-1.15) -- (0.0,-1.15) node [black,midway,yshift=-0.6cm] {\footnotesize $X \odot Y \in N_\infty(C_{p^{i+j+2}})$};

\end{tikzpicture}
\caption{The general picture for the operation $X \odot Y$. We highlight that the last vertex in $X$ occurs at position $i$, the pivot point is in spot $i+1$, and the first vertex of the suspension of $Y$ occurs at position $i+2$.}\label{dotdia}
\end{figure}

\begin{figure}[h!]
\centering
\begin{tikzpicture}[scale = 0.75]
\draw [black] (0,0) rectangle (0.5,0.5);
\draw [black,fill=gRed] (1.25,0) rectangle (9,0.5);

\node [fill,circle,draw,inner sep = 0pt, outer sep = 0pt, minimum size=1.5mm] (from) at (.25,.25) {};

\node [fill,circle,draw,inner sep = 0pt, outer sep = 0pt, minimum size=1.5mm] (to1) at (8.75,.25) {};
\node [fill,circle,draw,inner sep = 0pt, outer sep = 0pt, minimum size=1.5mm] (to2) at (8.5,.25) {};

\node [fill,circle,draw,inner sep = 0pt, outer sep = 0pt, minimum size=1.5mm] (to3) at (1.5,.25) {};
\node [fill,circle,draw,inner sep = 0pt, outer sep = 0pt, minimum size=1.5mm] (to4) at (1.75,.25) {};

\draw[->,bend left=60,thick] (from) to (to3);
\draw[->,bend left=60,thick] (from) to (to4);
\draw[->,bend left=50,thick] (from) to (to1);
\draw[->,bend left=50,thick] (from) to (to2);

\node at (5.125,0.25) {$\cdots$};

\draw [decorate,decoration={brace,amplitude=10pt}]
(9.0,-0.15) -- (1.25,-0.15) node [black,midway,yshift=-0.6cm] {\footnotesize $Y \in N_\infty(C_{p^j})$};

\draw [decorate,decoration={brace,amplitude=10pt}]
(9.0,-1.15) -- (0.0,-1.15) node [black,midway,yshift=-0.6cm] {\footnotesize $\emptyset \odot Y \in N_\infty(C_{p^{j+1}})$};

\end{tikzpicture}
\caption{The general picture for the operation $\emptyset \odot Y$.}\label{dotdia2}
\end{figure}

\begin{figure}[h!]
\centering
\begin{tikzpicture}[scale = 0.75]
\draw [black,fill=gGreen] (0,0) rectangle (7.75,0.5);
\draw [black] (8.5,0) rectangle (9,0.5);

\node [fill,circle,draw,inner sep = 0pt, outer sep = 0pt, minimum size=1.5mm] (to1) at (8.75,.25) {};
\node [fill,circle,draw,inner sep = 0pt, outer sep = 0pt, minimum size=1.5mm] (to3) at (7.25,.25) {};
\node [fill,circle,draw,inner sep = 0pt, outer sep = 0pt, minimum size=1.5mm] (to4) at (7.5,.25) {};
\node [fill,circle,draw,inner sep = 0pt, outer sep = 0pt, minimum size=1.5mm] at (0.25,.25) {};
\node [fill,circle,draw,inner sep = 0pt, outer sep = 0pt, minimum size=1.5mm] at (0.5,.25) {};

\node at (3.875,0.25) {$\cdots$};

\draw [decorate,decoration={brace,amplitude=10pt}]
(7.75,-0.15) -- (0.0,-0.15) node [black,midway,yshift=-0.6cm] {\footnotesize $X \in N_\infty(C_{p^i})$};

\draw [decorate,decoration={brace,amplitude=10pt}]
(9.0,-1.15) -- (0.0,-1.15) node [black,midway,yshift=-0.6cm] {\footnotesize $X \odot \emptyset \in N_\infty(C_{p^{i+1}})$};

\end{tikzpicture}
\caption{The general picture for the operation $X \odot \emptyset$.}\label{dotdia3}
\end{figure}

\newpage

Let us give some explicit examples of this construction before we prove that the resulting set of norm maps does indeed give an $N_\infty$-operad as we have claimed.

\begin{example}
Let
$$X = \left( \begin{gathered}  \begin{tikzpicture}[->, node distance=1cm,gGreen]
\node (1) at (2.000000,0) {$C_{p^1}$};
\node (2) at (0.000000,0) {$C_{p^0}$};
 \end{tikzpicture}  \end{gathered} \right) \in N_\infty(C_{p^1}),$$

 $$Y = \left( \begin{gathered}  \begin{tikzpicture}[->, node distance=1cm,gRed]
\node (1) at (2.000000,0) {$C_{p^1}$};
\node (2) at (0.000000,0) {$C_{p^0}$};
\draw (2) to (1);
 \end{tikzpicture}  \end{gathered} \right) \in N_\infty(C_{p^1}).$$

\vspace{10mm}

Then $X \odot Y = \left( \begin{gathered}  \begin{tikzpicture}[->, node distance=1cm]
\node[gGreen] (1) at (2.000000,0) {$C_{p^1}$};
\node[gGreen] (0) at (0.000000,0) {$C_{p^0}$};
\node (2) at (4.000000,0) {$C_{p^2}$};
\node[gRed] (3) at (6.000000,0) {$C_{p^3}$};
\node[gRed] (4) at (8.000000,0) {$C_{p^4}$};
\draw (2) to (3);
\draw[gRed] (3) to (4);
\draw[bend left = 20] (2) to (4);
 \end{tikzpicture}  \end{gathered} \right) \in N_\infty(C_{p^4}),$

\vspace{10mm}

and $Y \odot X = \left( \begin{gathered}  \begin{tikzpicture}[->, node distance=1cm]
\node[gRed] (1) at (2.000000,0) {$C_{p^1}$};
\node[gRed] (0) at (0.000000,0) {$C_{p^0}$};
\node (2) at (4.000000,0) {$C_{p^2}$};
\node[gGreen] (3) at (6.000000,0) {$C_{p^3}$};
\node[gGreen] (4) at (8.000000,0) {$C_{p^4}$};
\draw[gRed] (0) to (1);
\draw (2) to (3);
\draw[bend left = 20] (2) to (4);
 \end{tikzpicture}  \end{gathered} \right) \in N_\infty(C_{p^4}).$
\end{example}

\begin{example}
Let
$$X = \left( \begin{gathered}  \begin{tikzpicture}[->, node distance=1cm,gGreen]
\node (3) at (6.000000,0) {$C_{p^3}$};
\node (2) at (4.000000,0) {$C_{p^2}$};
\node (1) at (2.000000,0) {$C_{p^1}$};
\node (0) at (0.000000,0) {$C_{p^0}$};
\draw (0) to (1);
\draw (2) to (3);
 \end{tikzpicture}  \end{gathered} \right) \in N_\infty(C_{p^3})$$

 $$Y = N_\infty(C_{p^{-1}}) = \emptyset$$

Then $X \odot Y = \left( \begin{gathered}  \begin{tikzpicture}[->, node distance=1cm,gGreen]
\node[black] (4) at (8.000000,0) {$C_{p^4}$};
\node (3) at (6.000000,0) {$C_{p^3}$};
\node (2) at (4.000000,0) {$C_{p^2}$};
\node (1) at (2.000000,0) {$C_{p^1}$};
\node (0) at (0.000000,0) {$C_{p^0}$};
\draw (0) to (1);
\draw (2) to (3);
 \end{tikzpicture}  \end{gathered} \right) \in N_\infty(C_{p^4})$

and $Y \odot X = \left( \begin{gathered}  \begin{tikzpicture}[->, node distance=1cm]
\node[gGreen] (4) at (8.000000,0) {$C_{p^4}$};
\node[gGreen] (3) at (6.000000,0) {$C_{p^3}$};
\node[gGreen] (2) at (4.000000,0) {$C_{p^2}$};
\node[gGreen] (1) at (2.000000,0) {$C_{p^1}$};
\node (0) at (0.000000,0) {$C_{p^0}$};
\draw[gGreen] (1) to (2);
\draw[gGreen] (3) to (4);
\draw (0) to (1);
\draw[bend left = 20] (0) to (2);
\draw[bend left = 20] (0) to (3);
\draw[bend left = 20] (0) to (4);
 \end{tikzpicture}  \end{gathered} \right) \in N_\infty(C_{p^4}).$
\end{example}

\begin{prop}\label{isvalid}
For $X \in N_\infty(C_{p^i})$ and $Y \in N_\infty(C_{p^j})$, $X \odot Y$ satisfies the rules of Corollary~\ref{operad-struc}, and therefore is a valid object in $N_\infty(C_{p^{i+j+2}})$ for $-1 \leqslant i,j$. Moreover, the converse is true, that is, if $X \odot Y \in N_\infty(C_{p^{i+j+2}})$, then it follows that $X$ and $Y$ are both valid $N_\infty$-operads for their respective groups.
\end{prop}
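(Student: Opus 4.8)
The plan is to first specialise the two rules of Corollary~\ref{operad-struc} to the totally ordered subgroup lattice of $C_{p^n}$. Since every subgroup of a cyclic group is normal, the conjugation clause is vacuous, so only restriction and composition remain. Writing $N_a^b$ for the norm from $C_{p^a}$ to $C_{p^b}$ and using that $C_{p^a} \cap C_{p^c} = C_{p^{\min(a,c)}}$, the restriction rule collapses to the statement: if $N_a^b \in X$ and $a < c < b$, then $N_a^c \in X$ (the case $c \leqslant a$ being trivial), while composition is just transitivity. I would then record the block decomposition of the poset $\{0,1,\dots,i+j+2\}$ into the lower interval $L = \{0,\dots,i\}$, the pivot $i+1$, and the upper interval $U = \{i+2,\dots,i+j+2\}$, and classify the norms of $X \odot Y$ accordingly: those of $X$ have both endpoints in $L$, those of $\Sigma^{i+2}Y$ have both endpoints in $U$, and the bridge maps $N_{i+1}^k$ run from the pivot into $U$. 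The single structural fact driving everything is that \emph{no norm of $X \odot Y$ has the pivot $i+1$ as its target}, and no norm crosses directly from $L$ into $\{i+1\} \cup U$.

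For the forward direction I would verify restriction and composition by case analysis. Restriction is immediate: given $N_a^b$ and $a<c<b$, convexity of $L$ and of $U$ forces the intermediate index $c$ to lie in the same block as $a$ and $b$, so the claim reduces to restriction in $X$, restriction in $Y$ (after unshifting by $\Sigma^{i+2}$), or---for a bridge map $N_{i+1}^b$---to the observation that $N_{i+1}^c$ is again a bridge map. For composition I split on the location of the shared middle index $k$ of $N_l^k, N_k^h$: if $k \in L$ both maps are forced to be $X$-maps, since the only norms of $X \odot Y$ meeting $L$ are those of $X$, so I invoke composition in $X$; the case $k = i+1$ is vacuous because nothing targets the pivot; and if $k \in U$ then $N_k^h$ must be a $Y$-map, while $N_l^k$ is either a $Y$-map (compose in $Y$) or a bridge map $N_{i+1}^k$, in which case the composite $N_{i+1}^h$ is again a bridge map.

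For the converse I would note that $X$ is recovered as exactly the set of norms of $X \odot Y$ with both endpoints $\leqslant i$, and $\Sigma^{i+2}Y$ as exactly those with both endpoints $\geqslant i+2$. Because $L$ and $U$ are convex, applying restriction or composition to norms drawn from one block produces a norm lying in that same block; hence each instance of an axiom for $X$ (respectively $Y$) is literally an instance of the corresponding axiom for the valid system $X \odot Y$, read off inside the block. This forces $X$ and $Y$ to satisfy the axioms, i.e.\ to be valid. The degenerate values $i=-1$ or $j=-1$ simply empty out $L$, or empty out both $U$ and the bridge set, and are subsumed by the same analysis. I expect the only genuinely delicate point to be the bookkeeping around the pivot: it is precisely the absence of any norm into $i+1$ that decouples the two factors and makes both the composition step and the converse go through, so I would state that observation explicitly and early.
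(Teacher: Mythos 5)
Your proof is correct and follows essentially the same route as the paper's: you decompose $X \odot Y$ into the $X$-block, the pivot $C_{p^{i+1}}$, and the shifted $Y$-block with its bridge norms, check restriction and composition by cases, and recover $X$ and $Y$ from the two blocks for the converse. The paper makes the same argument informally by appeal to its diagrams; your explicit observation that no norm has the pivot as target is just a sharper statement of what the paper leaves implicit.
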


\begin{proof}
We must check that $X \odot Y$ satisfies the restriction and composition conditions. The simplest way to do this is to appeal to Figure~\ref{dotdia}. First of all, note that the the norm maps coming from $X$ are disjoint from the rest of the structure, and as we have assumed that $X$ is a valid $N_\infty$-operad for $G=C_{p^i}$, this part does not need further consideration.

The restriction rule for the remaining norm maps is clear. This rule is satisfied due to the addition of the norm maps $\{N_{i+1}^k\}_{i+1 < k \leq i+j+2}$. The composition rule will be satisfied because $Y$ was chosen to be in $N_\infty(C_{p^j})$, and suspending it to its new position will not affect this.

To see the converse of the statement, take two lattices $X$ and $Y$ of size $i$ and $j$ respectively
such that $X \odot Y \in N_\infty(C_{p^{i+j+2}})$.
We first of all note that $X$ must be an object of $N_\infty(C_{p^i})$. Clearly if $Y$ was not an object in $N_\infty(C_{p^j})$, then neither would its shift. Therefore it only remains to show that the addition of the norm maps $\{N_{i+1}^k\}_{i+1 < k \leq i+j+2}$ has no possibility of invalidating $Y$.
As mentioned above, adding these maps only serves to ensure the
restriction rule is satisfied for the additional point,
hence they cannot turn $Y$ into a invalid diagram.
\end{proof}

\begin{remark}
The $\odot$ operation has an operadic interpretation, as explained to the authors by J. Rubin. Suppose that we have two transfer systems $X$ and $Y$ which realise the operads $\mathcal{O}$ and $\mathcal{P}$ which are $C_{p^i}$ and $C_{p^j}$ $N_\infty$-operads respectively.

The inclusion $C_{p^i} \hookrightarrow C_{p^{i+j+2}}$ gives rise to a left derived induction functor, which when applied to
$\mathcal{O}$ realises $X$ as a $C_{p^{i+j+2}}$ $N_\infty$-operad.
Similarly, the quotient map $\pi \colon C_{p^{i+j+2}} \to C_{p^j}$ gives a left derived restriction functor, which when applied to $\mathcal{P}$ realises $\Sigma^{i+2} Y$.
Finally, there is a little disks operad $\mathcal{D}$ which realises the set of norm maps  $\{N_{i+1}^k\}_{i+1 < k \leq i+j+2}$. The homotopy coproduct of these three operads realises $X \odot Y$.

In particular, this result tells us that it is possible to use a homotopy colimit construction to inductively form the homotopy categories of $N_\infty$-operads for $G = C_{p^n}$. Of course, one hopes for a general result like this for arbitrary $G$, but as we will see in Section~\ref{sec:generalising} the situation becomes extremely complex.
\end{remark}

\subsection{Computing the cardinality of \texorpdfstring{$N_\infty(C_{p^n})$}{N-infinity(Cpn)}}

We now come to the first main result of this paper which gives the link between the set of $N_\infty$-operads for $C_{p^n}$ and the Catalan numbers. We shall prove that the cardinalities of these sets satisfy the defining recurrence relation for the Catalan numbers, and then we show how to construct a bijection between these $N_\infty$-operads and binary trees.

\begin{thm}\label{mainthm}
The cardinalities $|N_\infty(C_{p^n})|$ satisfy the recurrence relation
\begin{align*}
|N_\infty(C_{p^{-1}})| &=1, \\
|N_\infty(C_{p^{n}})| &= \sum^n_{i=0}|N_\infty(C_{p^{i-1}})| |N_\infty(C_{p^{n-i-1}})| \text{ for } n \geqslant 0.
\end{align*}
In particular we have that $|N_\infty(C_{p^{n}})| = \mathsf{Cat}(n+1)$.
\end{thm}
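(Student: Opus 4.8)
The plan is to promote the operation $\odot$ to a bijection and then read off the recurrence. Writing $\ell$ for the summation index, I would show that
\[
\Phi \colon \coprod_{\ell=0}^{n} N_\infty(C_{p^{\ell-1}}) \times N_\infty(C_{p^{n-\ell-1}}) \longrightarrow N_\infty(C_{p^n}), \qquad (X,Y)\mapsto X\odot Y,
\]
is a bijection. Proposition~\ref{isvalid} already supplies both halves of what I need: its forward direction makes $\Phi$ well defined (the summand indexed by $\ell$ has factors a $C_{p^{\ell-1}}$- and a $C_{p^{n-\ell-1}}$-operad, and $(\ell-1)+(n-\ell-1)+2=n$), and its converse will let me conclude that the two pieces I carve out of a given operad are themselves valid. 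The extreme summands $\ell=0$ and $\ell=n$ carry the formal factor $N_\infty(C_{p^{-1}})$, for which I adopt the convention $|N_\infty(C_{p^{-1}})|=1$ from the statement: its unique element is the empty operad $\emptyset$, producing the maps $\emptyset\odot Y$ and $X\odot\emptyset$ depicted in Figures~\ref{dotdia2} and~\ref{dotdia3}.

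The heart of the argument is a canonical \emph{pivot} that recovers the split point. For $Z\in N_\infty(C_{p^n})$ set $v=v(Z)$ to be the smallest vertex with $N_v^n\in Z$, or $v=n$ if no norm reaches the top. I then claim $Z=X\odot Y$ with $\ell=v$, where $X$ consists of the norms of $Z$ supported on $\{0,\dots,v-1\}$ and $Y=\Sigma^{-(v+1)}$ of the norms supported on $\{v+1,\dots,n\}$. Two consequences of Corollary~\ref{operad-struc} drive this. First, applying restriction to $N_v^n$ gives $N_v^w\in Z$ for all $v<w\leqslant n$, so the pivot fans out to everything above it; these are exactly the maps $\{N_v^k\}_{v<k\leqslant n}$ built into $\odot$. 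Second --- and this is the step I expect to be the main obstacle --- I must show that no norm of $Z$ straddles the pivot, i.e. there is no $N_a^b\in Z$ with $a<v\leqslant b$. If such a norm existed, restriction to $C_{p^v}<C_{p^b}$ would produce $N_a^v\in Z$, and composing with $N_v^n$ would give $N_a^n\in Z$ with $a<v$, contradicting the minimality of $v$. Consequently every norm of $Z$ lies in the part below the pivot, the part above it, or the pivot fan, so $Z$ is precisely $X\odot Y$; the converse half of Proposition~\ref{isvalid} then certifies that $X$ and $Y$ are valid operads. This establishes surjectivity of $\Phi$.

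For injectivity I would note that in any presentation $Z=X\odot Y$ the pivot $v(Z)$ is forced: the smallest vertex norming to the top is the fan vertex $i+1$, and it equals $n$ precisely in the degenerate summand where the right-hand factor is the $C_{p^{-1}}$ operad. Thus $\ell$, and then $X$ and $Y$ as the two sub-diagrams, are determined by $Z$, so $\Phi$ is injective. Comparing cardinalities across the bijection yields
\begin{align*}
|N_\infty(C_{p^{n}})| = \sum^n_{\ell=0}|N_\infty(C_{p^{\ell-1}})|\,|N_\infty(C_{p^{n-\ell-1}})|,
\end{align*}
which, together with $|N_\infty(C_{p^{-1}})|=1$, is the Catalan recurrence under the reindexing $\mathsf{Cat}(m)=|N_\infty(C_{p^{m-1}})|$. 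A routine induction, anchored at $|N_\infty(C_{p^{-1}})|=\mathsf{Cat}(0)=1$ and $|N_\infty(C_{p^{0}})|=\mathsf{Cat}(1)=1$, then promotes the matching recurrences to the asserted closed form $|N_\infty(C_{p^{n}})|=\mathsf{Cat}(n+1)$.
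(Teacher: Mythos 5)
Your proposal is correct and follows essentially the same route as the paper's own proof: the paper likewise decomposes each $Z \in N_\infty(C_{p^n})$ uniquely as $X \odot Y$ by locating the minimal $k$ with $N_k^n \in Z$ (your pivot $v$), treating the cases $k=0$, $k=n$ and $0<k<n$, and then invoking Proposition~\ref{isvalid}. The only difference is one of rigour in your favour: you make explicit the no-straddling argument via restriction and composition, and the injectivity of the resulting map $\Phi$, both of which the paper asserts only implicitly.
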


\begin{proof}
To prove this we shall show that every $N_\infty$-operad in $Z \in N_\infty(C_{p^{n}})$ can be written in the form $X \odot Y$ for (unique) $X \in N_\infty(C_{p^{i-1}})$ and $Y \in N_\infty(C_{p^{n-i-1}})$. This fact, along with Proposition~\ref{isvalid} completes the argument.

Suppose that $Z \in N_\infty(C_{p^{n}})$. We let $k \in \mathbb{Z}$ be the minimum integer such that the norm map $N_k^n$ is in $Z$. We have three cases to deal with here, either $k=0$, $0< k < n$ or $k=n$ (i.e., there is no such norm map). We start with the two extreme cases before dealing with the intermediate one.

\begin{itemize}
\item When $k=0$, we construct Z as $X \odot Y$ for $X = \emptyset \in N_\infty(C_{p^{-1}})$, and $Y$ an $N_\infty$-operad for $G=C_{p^{n-1}}$ as in Figure~\ref{dotdia2}.
\item When $k=n$, we construct Z as $X \odot Y$ for $Y = \emptyset \in N_\infty(C_{p^{-1}})$), and $X$ an $N_\infty$-operad for $G=C_{p^{n-1}}$ as in Figure~\ref{dotdia3}.
\item When $0<k<n$, we observe that we have two disjoint parts to $Z$, namely we are able to split off the subgroups $C_{p^i}$ for $0 \leqslant i < k$. Let us denote this part as $X$ (which lives in $N_\infty(C_{p^{k-1}})$, and the remaining part $Z'$. The crucial observation to make now is that $Z'$ looks like $\emptyset \odot Y$ for some $Y \in N_\infty(C_{p^{n-k-1}})$. We therefore conclude that $Z = X \odot Y$ as required.
\end{itemize}
\end{proof}

\begin{cor}
Every $N_\infty$-operad $Z$ for $G=C_{p^n}$ can be decomposed uniquely as $Z = X \odot Y$ for some $N_\infty$-operads $X$ and $Y$.
\end{cor}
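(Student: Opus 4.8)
The plan is to note that \emph{existence} of a decomposition is already contained in the proof of Theorem~\ref{mainthm}, so the genuine content of this corollary is \emph{uniqueness}; this is where I would concentrate the argument. Throughout I identify an element of $N_\infty(C_{p^n})$ with its set of non-trivial norm maps, reading off sources and targets as positions $0,1,\dots,n$ along the chain $C_{p^0} < \cdots < C_{p^n}$, and I allow the formal boundary pieces $\emptyset \in N_\infty(C_{p^{-1}})$ exactly as in the definition of $\odot$.

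For existence I would simply recall the construction from the proof of Theorem~\ref{mainthm}: given $Z \in N_\infty(C_{p^n})$, let $k$ be the least integer with $N_k^n \in Z$, using the convention $k=n$ when no non-trivial norm has target $n$. The three cases $k=0$, $0<k<n$ and $k=n$ exhibit $Z$ as $X \odot Y$ with $X \in N_\infty(C_{p^{k-1}})$ and $Y \in N_\infty(C_{p^{n-k-1}})$, and Proposition~\ref{isvalid} certifies that these two pieces are themselves valid $N_\infty$-operads.

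For uniqueness I would show that the splitting index is forced by $Z$ and that the two pieces are then recovered by restriction. Suppose $Z = X \odot Y$ with $X \in N_\infty(C_{p^i})$ and $Y \in N_\infty(C_{p^j})$, so $i+j+2=n$. Unwinding
$$X \odot Y = X \coprod \Sigma^{i+2} Y \coprod \{N_{i+1}^k\}_{i+1 < k \leq n},$$
every norm of $X$ has target at most $i$, every norm of $\Sigma^{i+2} Y$ has source at least $i+2$, and the interpolating norms all have source exactly $i+1$. Reading off the norms of $Z$ whose target is $n$, their possible sources are $i+1$ (present precisely when $j \geq 0$) together with sources $\geq i+2$ coming from $Y$; hence the least such source is exactly $i+1$, while the case $j=-1$ is precisely the case in which $n$ receives no non-trivial norm. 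In both situations the quantity $k$ defined above equals $i+1$, so $i=k-1$ is forced by $Z$ alone. With $i$ pinned down, the same source/target bookkeeping gives $X = \{\,N_a^b \in Z : b \leq i\,\}$ and $Y = \Sigma^{-(i+2)}\{\,N_a^b \in Z : a \geq i+2\,\}$, so any two decompositions necessarily agree.

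I expect the only delicate point to be the boundary bookkeeping in the degenerate cases $i=-1$ and $j=-1$, where $X$ or $Y$ is the formal empty operad; there one must check that the convention $k=n$ correctly detects $j=-1$ and that $i=-1$ is correctly detected by the interpolating vertex sitting at position $0$. Once these conventions are fixed this is a short verification, and combined with the existence statement it yields the corollary.
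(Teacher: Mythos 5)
Your proposal is correct, and its existence half coincides with the paper's: the paper offers no separate proof of this corollary, treating it as a direct byproduct of the proof of Theorem~\ref{mainthm}, which splits $Z$ at the minimal $k$ with $N_k^n \in Z$ (with $k=n$ when no such norm exists). Where you genuinely add something is uniqueness. The paper only asserts it parenthetically inside the proof of Theorem~\ref{mainthm} (``for (unique) $X \in N_\infty(C_{p^{i-1}})$ and $Y \in N_\infty(C_{p^{n-i-1}})$'') and never verifies that a decomposition $Z = X \odot Y$ forces the splitting index --- a verification that is in fact needed for the counting argument itself, since the recurrence requires the assignment $(X,Y) \mapsto X \odot Y$ to be injective with images for distinct $i$ disjoint. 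Your source/target bookkeeping supplies exactly this: every norm of $X$ has target at most $i$, every norm of $\Sigma^{i+2}Y$ has source at least $i+2$, and the interpolating norms have source exactly $i+1$, so the minimal source of a norm with target $n$ is $i+1$ (and the absence of any such norm detects $j=-1$); hence $i = k-1$ is determined by $Z$, after which $X = \{N_a^b \in Z : b \leq i\}$ and $\Sigma^{i+2}Y = \{N_a^b \in Z : a \geq i+2\}$ are recovered by restriction. Your attention to the degenerate cases $i = -1$ and $j = -1$ (including $n=0$, where both occur) is consistent with the paper's conventions, so the argument is complete and strengthens what the paper leaves implicit.
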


\begin{cor}\label{tree-cor}
There is a bijection of sets $\{N_\infty (C_{p^n} )\} \Leftrightarrow \{\text{rooted binary trees with n+2 \text{leaves}} \}$.
\end{cor}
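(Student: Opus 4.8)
The plan is to exhibit an explicit bijection $\Phi$ built directly from the $\odot$-decomposition, rather than merely invoking an equality of cardinalities. The guiding observation is that both sides obey the \emph{same} recursion. By Theorem~\ref{mainthm} the sets $N_\infty(C_{p^n})$ satisfy the Catalan recurrence; on the other hand, the interpretation recalled in Section~\ref{sec:cat} tells us that a rooted binary tree with $m \geq 2$ leaves is precisely an ordered pair consisting of a left subtree and a right subtree whose numbers of leaves sum to $m$, while a tree with a single leaf is atomic. This ordered root-split is the combinatorial shadow of the unique factorisation $Z = X \odot Y$ provided by the corollary immediately above.

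First I would set up the leaf bookkeeping so that the two recursions become literally identical. If $X \in N_\infty(C_{p^a})$ and $Y \in N_\infty(C_{p^b})$ with $X \odot Y \in N_\infty(C_{p^n})$, then $a + b + 2 = n$, equivalently $(a+2) + (b+2) = n+2$. Hence, declaring that an operad in $N_\infty(C_{p^m})$ should correspond to a tree with $m+2$ leaves, the two summands $X$ and $Y$ contribute subtrees with $a+2$ and $b+2$ leaves whose leaf counts add up to exactly $n+2$. The base of the induction is the formal convention $N_\infty(C_{p^{-1}}) = \{\emptyset\}$ used throughout: its unique element $\emptyset$ is sent to the single-leaf tree (which has $(-1)+2 = 1$ leaf), and this is precisely what makes $|N_\infty(C_{p^{-1}})| = 1$ agree with there being exactly one tree on one leaf.

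With this in place I would define $\Phi$ by strong induction on $n$: send $\emptyset \in N_\infty(C_{p^{-1}})$ to the one-leaf tree, and for $Z \in N_\infty(C_{p^n})$ with $n \geq 0$ take its unique decomposition $Z = X \odot Y$ and let $\Phi(Z)$ be the tree whose root has left subtree $\Phi(X)$ and right subtree $\Phi(Y)$. That $\Phi$ is well defined and bijective then follows by induction: uniqueness of the $\odot$-factorisation guarantees that the root-split is unambiguous, the inductive hypothesis makes $\Phi(X)$ and $\Phi(Y)$ well-defined trees of the correct sizes, and the inverse is obtained by reading off the ordered pair of subtrees at the root and applying $\Phi^{-1}$ recursively. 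It is worth recording that the non-commutativity of $\odot$ matches the genuine ordering (left versus right) of the two subtrees, so the correspondence respects the asymmetry present on both sides.

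The step I expect to require the most care is the bookkeeping at the two extreme branches of the decomposition (the cases $k=0$ and $k=n$ in the proof of Theorem~\ref{mainthm}), where one of $X, Y$ is the formal empty operad in $N_\infty(C_{p^{-1}})$. Here one must check that the recursive clause degenerates correctly to a leaf on the appropriate side, and that no tree with $n+2$ leaves is produced twice or omitted; this amounts to verifying that the minimal-$k$ rule partitions $N_\infty(C_{p^n})$ in lockstep with the root-split of trees. Since that partition is exactly the content already established, the verification should be routine once the indexing is pinned down.
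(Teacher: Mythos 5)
Your proposal is correct and is essentially the paper's own argument: the paper also defines the correspondence inductively by sending the unique decomposition $Z = X \odot Y$ to the tree whose root has left subtree the tree of $X$ and right subtree the tree of $Y$. The only difference is cosmetic --- the paper declares the formal operad $\emptyset \in N_\infty(C_{p^{-1}})$ to be the \emph{empty} tree rather than the one-leaf tree, which amounts to the same grafting rule; your convention arguably makes the leaf count $m+2$ work out more uniformly.
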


\begin{proof}
This follows immediately from Theorem~\ref{mainthm} and the discussion in \S\ref{sec:cat}, however, it will be beneficial to the next section to spell out exactly how the correspondence works inductively. To the trivial $N_\infty$-operad for $G=C_{p^0}$ we assign the following binary tree.
\begin{figure}[h]
\centering
\begin{tikzpicture}[scale=0.55]
\node [fill,circle,draw,inner sep = 0pt, outer sep = 0pt, minimum size=2mm] (3) at (0,1) {};
\draw (0,-.5) edge node {} (3);
\draw (1,2) edge node {} (3);
\draw (-1,2) edge node {} (3);
\end{tikzpicture}
\end{figure}

We will make the convention that $\emptyset$ is the empty tree. Assume that $n > 0$, we know from Theorem~\ref{mainthm} that any $N_\infty$-operad is of the form $X \odot Y$. We then have a binary tree associated to $X$ and a binary tree associated to $Y$, and we can form the binary tree associated to $X \odot Y$ in the following way.
\begin{figure}[h!]
\centering
\begin{tikzpicture}[scale=0.65]
\draw[fill=gGreen] (-5,1.5) circle [radius=0.5] node{$X$};
 \node[label={$\odot$}] at (-4,0) {};
\draw[fill=gRed] (-3,1.5) circle [radius=0.5] node{$Y$};
\draw (-5,1) edge (-5,0);
\draw (-3,1) edge (-3,0);
 \node[label={$=$}] at (-1.75,0) {};

\node [fill,circle,draw,inner sep = 0pt, outer sep = 0pt, minimum size=2mm] (3) at (0,1) {};
\draw (0,-.5) edge node {} (3);
\draw (1,2) edge node {} (3);
\draw (-1,2) edge node {} (3);
\draw[fill=gGreen] (-1,2) circle [radius=0.5] node{$X$};
\draw[fill=gRed] (1,2) circle [radius=0.5] node{$Y$};

\begin{scope}[xshift = 120, yshift = 20, xscale = 1.2]
\draw [black,fill=gGreen] (0,0) rectangle (5,0.5);
\draw [black] (5.75,0) rectangle (6.25,0.5);
\draw [black,fill=gRed] (7,0) rectangle (9,0.5);

\node [fill,circle,draw,inner sep = 0pt, outer sep = 0pt, minimum size=1.5mm] (from) at (6,.25) {};

\node [fill,circle,draw,inner sep = 0pt, outer sep = 0pt, minimum size=1.5mm] (to1) at (8.75,.25) {};
\node [fill,circle,draw,inner sep = 0pt, outer sep = 0pt, minimum size=1.5mm] (to2) at (8.5,.25) {};

\node [fill,circle,draw,inner sep = 0pt, outer sep = 0pt, minimum size=1.5mm] (to3) at (7.25,.25) {};
\node [fill,circle,draw,inner sep = 0pt, outer sep = 0pt, minimum size=1.5mm] (to4) at (7.5,.25) {};

\node [fill,circle,draw,inner sep = 0pt, outer sep = 0pt, minimum size=1.5mm] at (4.75,.25) {};
\node [fill,circle,draw,inner sep = 0pt, outer sep = 0pt, minimum size=1.5mm] at (4.5,.25) {};

\node [fill,circle,draw,inner sep = 0pt, outer sep = 0pt, minimum size=1.5mm] at (0.25,.25) {};
\node [fill,circle,draw,inner sep = 0pt, outer sep = 0pt, minimum size=1.5mm] at (0.5,.25) {};

\draw[->,bend left=80,thick] (from) to (to3);
\draw[->,bend left=80,thick] (from) to (to4);
\draw[->,bend left=80,thick] (from) to (to1);
\draw[->,bend left=80,thick] (from) to (to2);

\node at (8.025,0.25) {$\cdots$};
\node at (2.525,0.25) {$\cdots$};
\end{scope}

 \node[label={$\Leftrightarrow$}] at (2.5,0.5) {};

\end{tikzpicture}
\end{figure}

Following the convention of the empty diagram, we see that
\begin{figure}[h!]
\centering
\begin{tikzpicture}[scale=0.65]
\draw[fill=white] (-5,1.5) circle [radius=0.5] node{$\emptyset$};
 \node[label={$\odot$}] at (-4,0) {};
\draw[fill=gRed] (-3,1.5) circle [radius=0.5] node{$Y$};
\draw (-5,1) edge (-5,0);
\draw (-3,1) edge (-3,0);
 \node[label={$=$}] at (-1.75,0) {};

\node [fill,circle,draw,inner sep = 0pt, outer sep = 0pt, minimum size=2mm] (3) at (0,1) {};
\draw (0,-.5) edge node {} (3);
\draw (1,2) edge node {} (3);
\draw (-1,2) edge node {} (3);
\draw[fill=gRed] (1,2) circle [radius=0.5] node{$Y$};

\begin{scope}[xshift = 120, yshift = 20, xscale = 1.2]
\draw [black] (0,0) rectangle (0.5,0.5);
\draw [black,fill=gRed] (1.25,0) rectangle (9,0.5);

\node [fill,circle,draw,inner sep = 0pt, outer sep = 0pt, minimum size=1.5mm] (from) at (.25,.25) {};

\node [fill,circle,draw,inner sep = 0pt, outer sep = 0pt, minimum size=1.5mm] (to1) at (8.75,.25) {};
\node [fill,circle,draw,inner sep = 0pt, outer sep = 0pt, minimum size=1.5mm] (to2) at (8.5,.25) {};

\node [fill,circle,draw,inner sep = 0pt, outer sep = 0pt, minimum size=1.5mm] (to3) at (1.5,.25) {};
\node [fill,circle,draw,inner sep = 0pt, outer sep = 0pt, minimum size=1.5mm] (to4) at (1.75,.25) {};

\draw[->,bend left=60,thick] (from) to (to3);
\draw[->,bend left=60,thick] (from) to (to4);
\draw[->,bend left=50,thick] (from) to (to1);
\draw[->,bend left=50,thick] (from) to (to2);

\node at (5.125,0.25) {$\cdots$};
\end{scope}

 \node[label={$\Leftrightarrow$}] at (2.5,0.5) {};

\end{tikzpicture}
\end{figure}

and
\begin{figure}[h!]
\centering
\begin{tikzpicture}[scale=0.65]
\draw[fill=gGreen] (-5,1.5) circle [radius=0.5] node{$X$};
 \node[label={$\odot$}] at (-4,0) {};
\draw[fill=white] (-3,1.5) circle [radius=0.5] node{$\emptyset$};
\draw (-5,1) edge (-5,0);
\draw (-3,1) edge (-3,0);
 \node[label={$=$}] at (-1.75,0) {};

\node [fill,circle,draw,inner sep = 0pt, outer sep = 0pt, minimum size=2mm] (3) at (0,1) {};
\draw (0,-.5) edge node {} (3);
\draw (1,2) edge node {} (3);
\draw (-1,2) edge node {} (3);
\draw[fill=gGreen] (-1,2) circle [radius=0.5] node{$X$};

\begin{scope}[xshift = 120, yshift = 20, xscale = 1.2]
\draw [black,fill=gGreen] (0,0) rectangle (7.75,0.5);
\draw [black] (8.5,0) rectangle (9,0.5);

\node [fill,circle,draw,inner sep = 0pt, outer sep = 0pt, minimum size=1.5mm] (to1) at (8.75,.25) {};
\node [fill,circle,draw,inner sep = 0pt, outer sep = 0pt, minimum size=1.5mm] (to3) at (7.25,.25) {};
\node [fill,circle,draw,inner sep = 0pt, outer sep = 0pt, minimum size=1.5mm] (to4) at (7.5,.25) {};
\node [fill,circle,draw,inner sep = 0pt, outer sep = 0pt, minimum size=1.5mm] at (0.25,.25) {};
\node [fill,circle,draw,inner sep = 0pt, outer sep = 0pt, minimum size=1.5mm] at (0.5,.25) {};

\end{scope}

 \node[label={$\Leftrightarrow$}] at (2.5,0.5) {};

\end{tikzpicture}
\end{figure}

\end{proof}

\newpage

\begin{example}\label{ex6}
One may use the above algorithm to compute the binary trees associated to the objects of $N_\infty(C_{p^2})$ as follows.

\begin{figure}[h!]
\centering
 \begin{tikzpicture}[->, node distance=2cm, auto]
\node (3) at (6.000000,0) {$C_{p^2}$};
\node (2) at (4.000000,0) {$C_{p^1}$};
\node (1) at (2.000000,0) {$C_{p^0}$};
 \node[label={$\Leftrightarrow$}] at (7.75,-0.45) {};
  \node at (1.5,0) {\Huge{(}};
\node at (6.5,0) {\Huge{)}};

 \begin{scope}[xscale=-1, xshift = -20cm]
 \node [fill,circle,draw,inner sep = 0pt, outer sep = 0pt, minimum size=2mm] (11) at (10.5,0) {};
\node [fill,circle,draw,inner sep = 0pt, outer sep = 0pt, minimum size=2mm] (33) at (10,-0.5) {};
\node [fill,circle,draw,inner sep = 0pt, outer sep = 0pt, minimum size=2mm] (55) at (11,0.5) {};
\draw[-] (10,-1) edge node {} (33);
\draw[-] (33) edge node {} (55);
\draw[-] (33) edge node {} (8.5,1);
\draw[-] (11) edge node {} (9.5,1);
\draw[-] (11) edge node {} (33);
\draw[-] (55) edge node {} (10.5,1);
\draw[-] (55) edge node {} (11.5,1);
 \end{scope}
 \end{tikzpicture}
\end{figure}

\begin{figure}[h!]
 \begin{tikzpicture}[->, node distance=2cm, auto]
\node (3) at (6.000000,0) {$C_{p^2}$};
\node (2) at (4.000000,0) {$C_{p^1}$};
\node (1) at (2.000000,0) {$C_{p^0}$};
\draw (1) to (2);
 \node[label={$\Leftrightarrow$}] at (8,-0.45) {};
   \node at (1.5,0) {\Huge{(}};
\node at (6.5,0) {\Huge{)}};

 \begin{scope}[xscale=-1, xshift = -20cm]
  \node [fill,circle,draw,inner sep = 0pt, outer sep = 0pt, minimum size=2mm] (11) at (10.5,0) {};
\node [fill,circle,draw,inner sep = 0pt, outer sep = 0pt, minimum size=2mm] (33) at (10,-0.5) {};
\node [fill,circle,draw,inner sep = 0pt, outer sep = 0pt, minimum size=2mm] (55) at (10,0.5) {};
\draw[-] (10,-1) edge node {} (33);
\draw[-] (33) edge node {} (8.5,1);
\draw[-] (11) edge node {} (9.5,1);
\draw[-] (11) edge node {} (33);
\draw[-] (55) edge node {} (10.5,1);
\draw[-] (33) edge node {} (11.5,1);
 \end{scope}
 \end{tikzpicture}
\end{figure}

\begin{figure}[h!]
 \begin{tikzpicture}[->, node distance=2cm, auto]
\node (3) at (6.000000,0) {$C_{p^2}$};
\node (2) at (4.000000,0) {$C_{p^1}$};
\node (1) at (2.000000,0) {$C_{p^0}$};
\draw (1) to (2);
\draw (1) edge[bend left=20, looseness=0.5, ->] (3);
 \node[label={$\Leftrightarrow$}] at (8,-0.45) {};
   \node at (1.5,0) {\Huge{(}};
\node at (6.5,0) {\Huge{)}};

  \node [fill,circle,draw,inner sep = 0pt, outer sep = 0pt, minimum size=2mm] (11) at (10.5,0) {};
\node [fill,circle,draw,inner sep = 0pt, outer sep = 0pt, minimum size=2mm] (33) at (10,-0.5) {};
\node [fill,circle,draw,inner sep = 0pt, outer sep = 0pt, minimum size=2mm] (55) at (10,0.5) {};
\draw[-] (10,-1) edge node {} (33);
\draw[-] (33) edge node {} (8.5,1);
\draw[-] (11) edge node {} (9.5,1);
\draw[-] (11) edge node {} (33);
\draw[-] (55) edge node {} (10.5,1);
\draw[-] (33) edge node {} (11.5,1);
 \end{tikzpicture}
  \end{figure}

\begin{figure}[h!]
  \begin{tikzpicture}[->, node distance=2cm, auto]
\node (3) at (6.000000,0) {$C_{p^2}$};
\node (2) at (4.000000,0) {$C_{p^1}$};
\node (1) at (2.000000,0) {$C_{p^0}$};
\draw (2) to (3);
 \node[label={$\Leftrightarrow$}] at (8,-0.45) {};
   \node at (1.5,0) {\Huge{(}};
\node at (6.5,0) {\Huge{)}};

   \node [fill,circle,draw,inner sep = 0pt, outer sep = 0pt, minimum size=2mm] (11) at (10.5,0) {};
\node [fill,circle,draw,inner sep = 0pt, outer sep = 0pt, minimum size=2mm] (33) at (10,-0.5) {};
\node [fill,circle,draw,inner sep = 0pt, outer sep = 0pt, minimum size=2mm] (55) at (9.5,0) {};
\draw[-] (10,-1) edge node {} (33);
\draw[-] (33) edge node {} (9,0.5);
\draw[-] (11) edge node {} (10.1,0.5);
\draw[-] (11) edge node {} (33);
\draw[-] (55) edge node {} (9.9,0.5);
\draw[-] (33) edge node {} (11,0.5);
 \end{tikzpicture}
 \end{figure}

\begin{figure}[h!]
  \begin{tikzpicture}[->, node distance=2cm, auto]
\node (3) at (6.000000,0) {$C_{p^2}$};
\node (2) at (4.000000,0) {$C_{p^1}$};
\node (1) at (2.000000,0) {$C_{p^0}$};
\draw (1) to (2);
\draw (2) to (3);
\draw (1) edge[bend left=20, looseness=0.5, ->] (3);
\node[label={$\Leftrightarrow$}] at (7.75,-0.45) {};
  \node at (1.5,0) {\Huge{(}};
\node at (6.5,0) {\Huge{)}};

 \node [fill,circle,draw,inner sep = 0pt, outer sep = 0pt, minimum size=2mm] (11) at (10.5,0) {};
\node [fill,circle,draw,inner sep = 0pt, outer sep = 0pt, minimum size=2mm] (33) at (10,-0.5) {};
\node [fill,circle,draw,inner sep = 0pt, outer sep = 0pt, minimum size=2mm] (55) at (11,0.5) {};
\draw[-] (10,-1) edge node {} (33);
\draw[-] (33) edge node {} (55);
\draw[-] (33) edge node {} (8.5,1);
\draw[-] (11) edge node {} (9.5,1);
\draw[-] (11) edge node {} (33);
\draw[-] (55) edge node {} (10.5,1);
\draw[-] (55) edge node {} (11.5,1);
 \end{tikzpicture}
 \end{figure}
\end{example}

\normalfont

\subsection{The relation to the associahedron}

We shall now see that the relationship between $N_\infty(C_{p^n})$ and the Catalan numbers runs deeper than just the result of Theorem~\ref{mainthm}. Recall that we can put an order on binary trees. Indeed, let $X$ and $Y$ be binary trees with $n+1$ edges. Then we say that $X < Y$ if $Y$ can be obtained from $X$ by a (finite sequence of) \emph{clockwise} tree rotation operations, i.e., by moving a branch from left to right.

\begin{figure}[h!]
\centering
\begin{tikzpicture}[scale=0.5]
\node [fill,circle,draw,inner sep = 0pt, outer sep = 0pt, minimum size=2mm] (1) at (5,1) {};
\node [fill,circle,draw,inner sep = 0pt, outer sep = 0pt, minimum size=2mm] (3) at (6,0) {};
\draw (6,-1) edge node {} (3);
\draw (3) edge node {} (8,2);
\draw (1) edge node {} (4,2);
\draw (1) edge node {} (6,2);
\draw (1) edge node {} (3);

 \node[label={$<$}] at (8.5,0) {};

\node [fill,circle,draw,inner sep = 0pt, outer sep = 0pt, minimum size=2mm] (11) at (12,1) {};
\node [fill,circle,draw,inner sep = 0pt, outer sep = 0pt, minimum size=2mm] (33) at (11,0) {};
\draw (11,-1) edge node {} (33);
\draw (33) edge node {} (13,2);
\draw (33) edge node {} (9,2);
\draw (11) edge node {} (11,2);
\draw (11) edge node {} (33);
\end{tikzpicture}
\caption{An example of an order relation between two binary trees.\label{fig:binaryorder}}
\end{figure}

A more general example is given by Figure \ref{fig:generaltreeorder}. 
Furthermore, if a binary tree $X$ were to contain the left hand side as a subtree, then
we could make a binary tree $Y$ by replacing that subtree with the right hand side. We would see that $Y$ is obtained from 
$X$ by a (finite sequence of) clockwise tree rotation operations, so $X<Y$.

\begin{figure}[h!]
\centering
\begin{tikzpicture}[scale=0.7]
\node [fill,circle,draw,inner sep = 0pt, outer sep = 0pt, minimum size=2mm] (1) at (4,1) {};
\node [fill,circle,draw,inner sep = 0pt, outer sep = 0pt, minimum size=2mm] (3) at (5,0) {};
\draw (5,-1) edge node {} (3);
\draw (3) edge node {} (7,2);
\draw (1) edge node {} (3,2);
\draw (1) edge node {} (5,2);
\draw (1) edge node {} (3);
\draw[fill=gRed] (7,2) circle [radius=0.5] node{$C$};
\draw[fill=gGreen] (5,2) circle [radius=0.5] node{$B$};
\draw[fill=gYellow] (3,2) circle [radius=0.5] node{$A$};
 \node[label={$<$}] at (9,0) {};
\node [fill,circle,draw,inner sep = 0pt, outer sep = 0pt, minimum size=2mm] (11) at (14,1) {};
\node [fill,circle,draw,inner sep = 0pt, outer sep = 0pt, minimum size=2mm] (33) at (13,0) {};
\draw (13,-1) edge node {} (33);
\draw (33) edge node {} (15,2);
\draw (33) edge node {} (11,2);
\draw (11) edge node {} (13,2);
\draw (11) edge node {} (33);
\draw[fill=gYellow] (11,2) circle [radius=0.5] node{$A$};
\draw[fill=gGreen] (13,2) circle [radius=0.5] node{$B$};
\draw[fill=gRed] (15,2) circle [radius=0.5] node{$C$};
\end{tikzpicture}
\caption{A general example of on order relation between two binary trees.\label{fig:generaltreeorder}}
\end{figure}

The poset structure on the set of binary trees with $n+1$ edges is known as the $n$-associaheadron, see Stasheff~\cite{MR0158400}. We shall denote this poset structure as $\mathcal{A}_{n}$.

\vskip2cm

We can also implement a poset structure on $N_\infty(C_{p^n})$ by fixing that $X < Y$ if $Y$ can be obtained from $X$ via the addition of norm maps. For example, we have the following.
\begin{figure}[h!]
 \begin{tikzpicture}[->, node distance=2cm, auto]
\node (1) at (6.000000,0) {$C_{p^1}$};
\node (2) at (4.000000,0) {$C_{p^0}$};
  \node at (3.5,0) {\Huge{(}};
\node at (6.5,0) {\Huge{)}};

 \node[label={$<$}] at (7,-0.4) {};

\node (11) at (10.000000,0) {$C_{p^1}$};
\node (22) at (8.000000,0) {$C_{p^0}$};
  \node at (7.5,0) {\Huge{(}};
\node at (10.5,0) {\Huge{)}};
\draw (22) to (11);
 \end{tikzpicture}
\end{figure}

%Therefore, in our depiction, $X<Y$ for norm diagrams $X$ and $Y$ if $Y$ can be obtained from $X$ by adding arrows.

\begin{remark}\label{rem:order}
Note that $Z \odot -$ and $- \odot Z$ preserve this ordering. That is, if $X < Y$ then it follows that $Z \odot X < Z \odot Y$ and $X \odot Z < Y \odot Z$.
\end{remark}

We now prove our main theorem which tells us that these poset structures actually agree.

\begin{thm}
There is an order-preserving and order-reflecting bijection of posets
$$\{N_\infty (C_{p^n} )\} \Leftrightarrow \{\text{rooted binary trees with n+2 \text{leaves}} \} \Leftrightarrow \mathcal{A}_{n+1}.$$
\end{thm}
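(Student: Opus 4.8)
The plan is to show that the set bijection $\phi\colon N_\infty(C_{p^n}) \to \{\text{rooted binary trees with } n+2 \text{ leaves}\}$ constructed in Corollary~\ref{tree-cor} is an isomorphism of posets; the second identification with $\mathcal{A}_{n+1}$ is the \emph{definition} of the associahedron as the rotation order on such trees, so no further work is needed there. Since both sides are finite posets and $\phi$ is already a bijection, it suffices to prove that $\phi$ and $\phi^{-1}$ are monotone (this is precisely order-preserving and order-reflecting), and for this it is enough to match covering relations. On the operad side the order is inclusion of norm sets, so a covering relation $W \lessdot W'$ is a minimal valid enlargement of $W$; on the tree side a covering relation is a single clockwise rotation. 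I would therefore organise the whole argument as an induction on $n$ built on the recursive description $\phi(X \odot Y) = (\phi(X),\phi(Y))$ from Corollary~\ref{tree-cor}, where $(\,\cdot\,,\,\cdot\,)$ denotes grafting two trees under a new root.

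First I would record the recursive anatomy of a rotation. A clockwise rotation occurs at a unique internal vertex $v$: if $v$ is the root it is a root rotation $((T_1,T_2),T_R) \lessdot (T_1,(T_2,T_R))$, and otherwise $v$ lies in the left or the right principal subtree and the move is internal to it. Under $\phi$ the internal cases correspond to a covering relation taking place entirely within the $X$-block or the $Y$-block of $W = X \odot Y$. Because the defining formula for $\odot$ places the norms of $X$, the shifted norms $\Sigma^{i+2}Y$, and the connecting norms $\{N_{i+1}^k\}$ in mutually non-interacting positions (no composite or restriction crosses between the blocks, exactly as in the proof of Proposition~\ref{isvalid}), the assignments $X \mapsto X \odot Y$ and $Y \mapsto X \odot Y$ are poset embeddings onto their images. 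Hence an internal rotation maps to an operad covering relation by the inductive hypothesis.

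The crux is the root rotation. Writing $W = (X_1 \odot X_2)\odot Y$ and $W' = X_1\odot(X_2\odot Y)$ with $X_1 \in N_\infty(C_{p^a})$, $X_2\in N_\infty(C_{p^b})$, $Y \in N_\infty(C_{p^c})$ and $n = a+b+c+4$, I would expand both norm sets directly from the definitions of $\odot$ and $\Sigma$. They agree except on the norms emanating from the vertex $C_{p^{a+1}}$, and one finds
$$W' = W \sqcup \{\, N_{a+1}^m : a+b+3 \leqslant m \leqslant n \,\},$$
so in particular $W \subsetneq W'$ and the rotation strictly increases the norm set, as required for $\phi^{-1}$ to be monotone. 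The key point is that this is a \emph{single} covering relation. Indeed $N_{a+b+3}^n$ already lies in $W$ (it is the longest connecting norm of the inner $\odot$), so adding the shortest new norm $N_{a+1}^{a+b+3}$ and composing with $N_{a+b+3}^n$ forces $N_{a+1}^n$, whence restriction forces every $N_{a+1}^m$ with $a+1 < m \leqslant n$. Thus no valid transfer system lies strictly between $W$ and $W'$, and $W \lessdot W'$. This forcing phenomenon — one admissible step on the tree side triggering a whole family of norms at once — is exactly the subtle interaction I expect to be the main obstacle, and it is where the composition axiom of Corollary~\ref{operad-struc} does the real work.

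Having shown that every rotation maps to an operad covering relation, $\phi^{-1}$ is monotone. To obtain that $\phi$ itself is monotone I would run the symmetric classification of operad covers: given $W \lessdot W'$, compare the root cuts $k = \min\{\,j : N_j^n \in W\,\}$ and $k' = \min\{\,j : N_j^n \in W'\,\} \leqslant k$. If $k' = k$ the two systems share their connecting norms and differ inside a single independent block, so by minimality and the block-independence above the cover is internal and matches a left- or right-rotation; if $k' < k$, then a norm $N_{k'}^n$ appears, and the same composition-forcing argument shows the reach of $C_{p^{k'}}$ must jump all the way to $n$ in one step, pinning the change down to precisely the root-rotation pattern. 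This exhausts the operad covering relations and matches them bijectively with rotations, so $\phi$ is a poset isomorphism; composing with the definitional identification of trees with $\mathcal{A}_{n+1}$ yields the claimed chain of isomorphisms.
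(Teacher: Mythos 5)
Your plan is sound in outline and, once completed, is essentially the paper's proof recast in terms of covering relations; moreover its best parts are done correctly. The expansion $W' = W \sqcup \{N_{a+1}^m : a+b+3 \leqslant m \leqslant n\}$ for the root rotation is right, and your forcing argument (restriction gives $N_{a+1}^{a+b+3}$, composition with the connecting norm $N_{a+b+3}^n$ gives $N_{a+1}^n$, restriction gives the rest) does show this enlargement is a single covering relation. The gap is in the converse classification, in the case $k' < k$. Write $\kappa$ for the root cut of the left block $X$ of $W = X \odot Y$. Your ``composition-forcing'' argument proves only that $k' \leqslant \kappa$: if $\kappa < k' < k$, then restricting $N_\kappa^{k-1} \in X$ gives $N_\kappa^{k'} \in W'$, which composes with the new $N_{k'}^n$ to give $N_\kappa^n \in W'$, contradicting minimality of $k'$. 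But nothing in your sketch rules out $k' < \kappa$, and no forcing argument can: when $k' < \kappa$ the closure $\mathrm{cl}(W \cup \{N_{k'}^n\})$ is a perfectly consistent transfer system with root cut $k'$ --- it simply is never a \emph{cover} of $W$, and that is a statement about the nonexistence of intermediate systems, which has to be proved by constructing one.

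Concretely, for $n = 3$ take $W = \{N_1^2\}$, so $k = 3$ and $\kappa = 1$, and take $k' = 0$: then $\mathrm{cl}(W \cup \{N_0^3\}) = \{N_0^1, N_0^2, N_0^3, N_1^2\}$, and this fails to be a cover of $W$ only because the valid system $\{N_0^1, N_0^2, N_1^2\}$ (enlarge inside the $X$-block only) lies strictly in between. Without proving such an intermediate system always exists, ``deep'' enlargements of this kind could a priori be covers that are not root rotations, and your case analysis would not exhaust the covers of $W$, leaving the monotonicity of $\phi$ unproved. The fix is a block-splitting construction rather than forcing: given a cover $W \lessdot W'$ with $k' < k$, let $X^\sharp$ be the set of norms of $W'$ with target contained in $C_{p^{k-1}}$; then $X^\sharp \odot Y$ is valid by Proposition~\ref{isvalid}, satisfies $W \subseteq X^\sharp \odot Y \subseteq W'$, and has root cut $k$, so it cannot equal $W'$; the covering hypothesis then forces $X^\sharp = X$. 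Since restriction of $N_{k'}^n$ puts $N_{k'}^{k-1}$ in $X^\sharp = X$, we get $\kappa \leqslant k'$, which combined with your forcing bound pins down $k' = \kappa$, and then $W' = \mathrm{cl}(W \cup \{N_\kappa^n\}) = W \sqcup \{N_\kappa^m : k \leqslant m \leqslant n\}$ is exactly the root rotation. With this lemma inserted, your argument goes through.
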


\begin{proof}
Let us begin by showing that a clockwise tree rotation corresponds to the addition of an arrow in the corresponding $N_\infty$-diagram, or more specifically, the addition of a norm map. We shall do this by appealing to the diagrammatic representations, as it provides the cleanest proof. 
Consider a branch move as in Figure \ref{fig:generaltreeorder}. 
The left hand tree corresponds to the first norm diagram below, which we may call $(A \odot B) \odot C$. 
The right hand tree corresponds to the second (where restrictions of the largest arrow
are omitted for clarity), which is $A \odot (B \odot C)$. 

\begin{figure}[h!]
\centering
\begin{tikzpicture}
\draw [black,fill=gGreen] (3,0) rectangle (5,0.5);
\draw [black] (5.75,0) rectangle (6.25,0.5);
\draw [black,fill=gRed] (7,0) rectangle (9,0.5);
\draw [black] (1.75,0) rectangle (2.25,0.5);
\draw [black,fill=gYellow] (-1,0) rectangle (1,0.5);

\node [fill,circle,draw,inner sep = 0pt, outer sep = 0pt, minimum size=1.5mm] at (0.75,.25) {};
\node [fill,circle,draw,inner sep = 0pt, outer sep = 0pt, minimum size=1.5mm] at (0.5,.25) {};
\node [fill,circle,draw,inner sep = 0pt, outer sep = 0pt, minimum size=1.5mm] at (-0.75,.25) {};
\node [fill,circle,draw,inner sep = 0pt, outer sep = 0pt, minimum size=1.5mm] at (-0.5,.25) {};
\node at (0.025,0.25) {$\cdots$};

\node [fill,circle,draw,inner sep = 0pt, outer sep = 0pt, minimum size=1.5mm] (Afrom) at (2,.25) {};
\node [fill,circle,draw,inner sep = 0pt, outer sep = 0pt, minimum size=1.5mm] (Ato1) at (4.75,.25) {};
\node [fill,circle,draw,inner sep = 0pt, outer sep = 0pt, minimum size=1.5mm] (Ato2) at (4.5,.25) {};
\node [fill,circle,draw,inner sep = 0pt, outer sep = 0pt, minimum size=1.5mm] (Ato3) at (3.25,.25) {};
\node [fill,circle,draw,inner sep = 0pt, outer sep = 0pt, minimum size=1.5mm] (Ato4) at (3.5,.25) {};

\draw[->,bend left=80,thick] (Afrom) to (Ato3);
\draw[->,bend left=80,thick] (Afrom) to (Ato4);
\draw[->,bend left=80,thick] (Afrom) to (Ato1);
\draw[->,bend left=80,thick] (Afrom) to (Ato2);
\node at (4.025,0.25) {$\cdots$};

\node [fill,circle,draw,inner sep = 0pt, outer sep = 0pt, minimum size=1.5mm] (from) at (6,.25) {};
\node [fill,circle,draw,inner sep = 0pt, outer sep = 0pt, minimum size=1.5mm] (to1) at (8.75,.25) {};
\node [fill,circle,draw,inner sep = 0pt, outer sep = 0pt, minimum size=1.5mm] (to2) at (8.5,.25) {};
\node [fill,circle,draw,inner sep = 0pt, outer sep = 0pt, minimum size=1.5mm] (to3) at (7.25,.25) {};
\node [fill,circle,draw,inner sep = 0pt, outer sep = 0pt, minimum size=1.5mm] (to4) at (7.5,.25) {};

\draw[->,bend left=80,thick] (from) to (to3);
\draw[->,bend left=80,thick] (from) to (to4);
\draw[->,bend left=80,thick] (from) to (to1);
\draw[->,bend left=80,thick] (from) to (to2);
\node at (8.025,0.25) {$\cdots$};

\end{tikzpicture}
\begin{tikzpicture}
\draw [black,fill=gGreen] (3,0) rectangle (5,0.5);
\draw [black] (5.75,0) rectangle (6.25,0.5);
\draw [black,fill=gRed] (7,0) rectangle (9,0.5);
\draw [black] (1.75,0) rectangle (2.25,0.5);
\draw [black,fill=gYellow] (-1,0) rectangle (1,0.5);

\node [fill,circle,draw,inner sep = 0pt, outer sep = 0pt, minimum size=1.5mm] at (0.75,.25) {};
\node [fill,circle,draw,inner sep = 0pt, outer sep = 0pt, minimum size=1.5mm] at (0.5,.25) {};
\node [fill,circle,draw,inner sep = 0pt, outer sep = 0pt, minimum size=1.5mm] at (-0.75,.25) {};
\node [fill,circle,draw,inner sep = 0pt, outer sep = 0pt, minimum size=1.5mm] at (-0.5,.25) {};
\node at (0.025,0.25) {$\cdots$};

\node [fill,circle,draw,inner sep = 0pt, outer sep = 0pt, minimum size=1.5mm] (Afrom) at (2,.25) {};
\node [fill,circle,draw,inner sep = 0pt, outer sep = 0pt, minimum size=1.5mm] (Ato1) at (4.75,.25) {};
\node [fill,circle,draw,inner sep = 0pt, outer sep = 0pt, minimum size=1.5mm] (Ato2) at (4.5,.25) {};
\node [fill,circle,draw,inner sep = 0pt, outer sep = 0pt, minimum size=1.5mm] (Ato3) at (3.25,.25) {};
\node [fill,circle,draw,inner sep = 0pt, outer sep = 0pt, minimum size=1.5mm] (Ato4) at (3.5,.25) {};

\draw[->,bend left=80,thick] (Afrom) to (Ato3);
\draw[->,bend left=80,thick] (Afrom) to (Ato4);
\draw[->,bend left=80,thick] (Afrom) to (Ato1);
\draw[->,bend left=80,thick] (Afrom) to (Ato2);
\node at (4.025,0.25) {$\cdots$};

\node [fill,circle,draw,inner sep = 0pt, outer sep = 0pt, minimum size=1.5mm] (from) at (6,.25) {};
\node [fill,circle,draw,inner sep = 0pt, outer sep = 0pt, minimum size=1.5mm] (to1) at (8.75,.25) {};
\node [fill,circle,draw,inner sep = 0pt, outer sep = 0pt, minimum size=1.5mm] (to2) at (8.5,.25) {};
\node [fill,circle,draw,inner sep = 0pt, outer sep = 0pt, minimum size=1.5mm] (to3) at (7.25,.25) {};
\node [fill,circle,draw,inner sep = 0pt, outer sep = 0pt, minimum size=1.5mm] (to4) at (7.5,.25) {};

\draw[->,bend left=80,thick] (from) to (to3);
\draw[->,bend left=80,thick] (from) to (to4);
\draw[->,bend left=80,thick] (from) to (to1);
\draw[->,bend left=80,thick] (from) to (to2);
\node at (8.025,0.25) {$\cdots$};

\draw[->,bend left=80,thick] (Afrom) to (to1);
\end{tikzpicture}
\caption{The diagrams corresponding to the trees in Figure \ref{fig:generaltreeorder}.}\label{fig:after9}
\end{figure}

\vspace{3mm}

We now show that adding an arrow in a norm diagram induces a clockwise tree rotation in the corresponding binary trees. We shall do this by induction on $n$. Note that the base case can be easily checked, see Figure~\ref{fig:binaryorder} and the corresponding discussion below it. Example \ref{ex6} then illustrates the next case. Suppose that we begin with an arbitrary norm diagram $A \odot Z$ to which we have added the red arrow. 

\begin{figure}[h!]
\centering
\begin{tikzpicture}[scale = 0.8]
\draw [black,fill=gGreen] (0,0) rectangle (5,0.5);
\draw [black] (5.75,0) rectangle (6.25,0.5);
\draw [black,fill=gRed] (7,0) rectangle (9,0.5);

\node [fill,circle,draw,inner sep = 0pt, outer sep = 0pt, minimum size=1.5mm] (from) at (6,.25) {};

\node [fill,circle,draw,inner sep = 0pt, outer sep = 0pt, minimum size=1.5mm] (to1) at (8.75,.25) {};
\node [fill,circle,draw,inner sep = 0pt, outer sep = 0pt, minimum size=1.5mm] (to2) at (8.5,.25) {};

\node [fill,circle,draw,inner sep = 0pt, outer sep = 0pt, minimum size=1.5mm] (to3) at (7.25,.25) {};
\node [fill,circle,draw,inner sep = 0pt, outer sep = 0pt, minimum size=1.5mm] (to4) at (7.5,.25) {};

\node [fill,circle,draw,inner sep = 0pt, outer sep = 0pt, minimum size=1.5mm] (from1) at (4.75,.25) {};
\node [fill,circle,draw,inner sep = 0pt, outer sep = 0pt, minimum size=1.5mm] at (4.5,.25) {};

\node [fill,circle,draw,inner sep = 0pt, outer sep = 0pt, minimum size=1.5mm] at (0.25,.25) {};
\node [fill,circle,draw,inner sep = 0pt, outer sep = 0pt, minimum size=1.5mm] at (0.5,.25) {};

\node at (8.025,0.25) (to21) {$\cdots$};
\node at (2.525,0.25) (from12) {$\cdots$};

\draw[->,bend left=80,thick] (from) to (to1);
\draw[->, bend right=40,thick, gRed2] (from12) to (to21);
\end{tikzpicture}
\end{figure}

We can assume that the new arrow starts in $A$ and ends in $Z$. %with $Z$ being nonempty- the case of it being entirely contained in just $A$ or $Z$ (or $Z = \emptyset$) is covered by the induction.
By the composition and restriction rule, we can without loss of generality assume that the new arrow
ends at the pivot (the added vertex in $A \odot Z$). 

\begin{figure}[h!]
\centering
\begin{tikzpicture}[scale = 0.8]
\draw [black,fill=gGreen] (0,0) rectangle (5,0.5);
\draw [black] (5.75,0) rectangle (6.25,0.5);
\draw [black,fill=gRed] (7,0) rectangle (9,0.5);

\node [fill,circle,draw,inner sep = 0pt, outer sep = 0pt, minimum size=1.5mm] (from) at (6,.25) {};

\node [fill,circle,draw,inner sep = 0pt, outer sep = 0pt, minimum size=1.5mm] (to1) at (8.75,.25) {};
\node [fill,circle,draw,inner sep = 0pt, outer sep = 0pt, minimum size=1.5mm] (to2) at (8.5,.25) {};

\node [fill,circle,draw,inner sep = 0pt, outer sep = 0pt, minimum size=1.5mm] (to3) at (7.25,.25) {};
\node [fill,circle,draw,inner sep = 0pt, outer sep = 0pt, minimum size=1.5mm] (to4) at (7.5,.25) {};

\node [fill,circle,draw,inner sep = 0pt, outer sep = 0pt, minimum size=1.5mm] (from1) at (4.75,.25) {};
\node [fill,circle,draw,inner sep = 0pt, outer sep = 0pt, minimum size=1.5mm] at (4.5,.25) {};

\node [fill,circle,draw,inner sep = 0pt, outer sep = 0pt, minimum size=1.5mm] at (0.25,.25) {};
\node [fill,circle,draw,inner sep = 0pt, outer sep = 0pt, minimum size=1.5mm] at (0.5,.25) {};

\node at (8.025,0.25) {$\cdots$};
\node at (2.525,0.25) (from12) {$\cdots$};

\draw[->,bend left=80,thick] (from) to (to1);
\draw[->, bend right=40,thick, gRed2] (from12) to (from);
\end{tikzpicture}
\end{figure}

We can split up the left hand block into a diagram of the form $X \odot Y$ for some smaller diagrams $X$ and $Y$. 
This gives three different cases to consider based on where the new arrow begins.
These situations are summarised in Figure~\ref{casesthree}. In particular, the new arrow could
start in $Y$, giving Case 1, it could start in $X$, giving Case 3, or the final option is that the new arrow begins at the vertex arising from the $\odot$ operation in $X \odot Y$.

\begin{figure}[h!]
\centering
\begin{tikzpicture}
\draw [black,fill=gGreen] (3,0) rectangle (5,0.5);
\draw [black] (5.75,0) rectangle (6.25,0.5);
\draw [black,fill=gRed] (7,0) rectangle (9,0.5);
\draw [black] (1.75,0) rectangle (2.25,0.5);
\draw [black,fill=gYellow] (-1,0) rectangle (1,0.5);

\node [fill,circle,draw,inner sep = 0pt, outer sep = 0pt, minimum size=1.5mm] at (0.75,.25) {};
\node [fill,circle,draw,inner sep = 0pt, outer sep = 0pt, minimum size=1.5mm] at (0.5,.25) {};
\node [fill,circle,draw,inner sep = 0pt, outer sep = 0pt, minimum size=1.5mm]  at (-0.75,.25) {};
\node [fill,circle,draw,inner sep = 0pt, outer sep = 0pt, minimum size=1.5mm] at (-0.5,.25) {};
\node at (0.025,0.25) (Bfrom) {$\cdots$};

\node [fill,circle,draw,inner sep = 0pt, outer sep = 0pt, minimum size=1.5mm] (Afrom) at (2,.25) {};
\node [fill,circle,draw,inner sep = 0pt, outer sep = 0pt, minimum size=1.5mm] (Ato1) at (4.75,.25) {};
\node [fill,circle,draw,inner sep = 0pt, outer sep = 0pt, minimum size=1.5mm] (Ato2) at (4.5,.25) {};
\node [fill,circle,draw,inner sep = 0pt, outer sep = 0pt, minimum size=1.5mm] (Ato3) at (3.25,.25) {};
\node [fill,circle,draw,inner sep = 0pt, outer sep = 0pt, minimum size=1.5mm] (Ato4) at (3.5,.25) {};

\draw[->,bend left=80,thick] (Afrom) to (Ato1);
\node at (4.025,0.25) (Ato12) {$\cdots$};

\node [fill,circle,draw,inner sep = 0pt, outer sep = 0pt, minimum size=1.5mm] (from) at (6,.25) {};
\node [fill,circle,draw,inner sep = 0pt, outer sep = 0pt, minimum size=1.5mm] (to1) at (8.75,.25) {};
\node [fill,circle,draw,inner sep = 0pt, outer sep = 0pt, minimum size=1.5mm] (to2) at (8.5,.25) {};
\node [fill,circle,draw,inner sep = 0pt, outer sep = 0pt, minimum size=1.5mm] (to3) at (7.25,.25) {};
\node [fill,circle,draw,inner sep = 0pt, outer sep = 0pt, minimum size=1.5mm] (to4) at (7.5,.25) {};

\draw[->,bend left=80,thick] (from) to (to1);
\node at (8.025,0.25) {$\cdots$};

\draw[->, bend right=40,thick, gRed2] (Ato12)  to node[above,black] {1} (from);
\draw[->, bend right=40,thick, gRed2] (Afrom) to node[above,black] {2} (from);
\draw[->, bend right=40,thick, gRed2] (Bfrom) to node[above,black] {3} (from);
\end{tikzpicture}
\caption{The three cases for adding a non-trivial norm map to $(X \odot Y) \odot Z$}\label{casesthree}
\end{figure}

%\vspace{5mm}

Case 2 has already been covered in the beginning of this proof, 
as adding an arrow connecting the two pivots is equivalent to adding an arrow from the first pivot
to the right most vertex. Therefore, it is the situation described in Figure \ref{fig:generaltreeorder} and Figure \ref{fig:after9}. 
In that part we illustrated the corresponding tree move 
to the addition of such an arrow.

%Cases 1 can be verified using the induction hypothesis as we do not need to consider the leftmost block--it is not affected by the addition of a new arrow, therefore we can assume that $X = \emptyset$. In terms of trees, we could summarise this as follows.

For Case 1, 
it is important to note that adding the arrow (1) creates the arrow (2) via composition, 
but that adding the arrow (2) does \emph{not} create arrow (1) by any of the rules. 
However, we can say that adding arrow (1) is equivalent to first adding arrow (2) and then arrow (1).

Hence, we add arrow (2) to $(X \odot Y) \odot Z$. 
By Case 2, this gives $X \odot (Y \odot Z)$
and in terms of trees gives Figure \ref{fig:generaltreeorder}.
We then add arrow (1). This only affects the term $Y \odot Z$, giving a new diagram $T$
with $Y \odot Z < T$. Moreover, $Y \odot Z$ and $T$ are in 
$\{N_\infty (C_{p^k} )\}$ for $k < n$. 
By induction, we have that the trees corresponding to 
$Y \odot Z$ and $T$ are ordered correctly.  

\begin{figure}[h!]
\centering
\begin{tikzpicture}[scale=0.7]
\node [fill,circle,draw,inner sep = 0pt, outer sep = 0pt, minimum size=2mm] (3) at (0,1) {};
\draw (0,-.5) edge node {} (3);
\draw (1,2) edge node {} (3);
\draw (-1,2) edge node {} (3);
\draw[fill=gGreen] (-1,2) circle [radius=0.5] node{$Y$};
\draw[fill=gRed] (1,2) circle [radius=0.5] node{$Z$};

 \node[label={$<$}] at (3,1) {};

\node [fill,circle,draw,inner sep = 0pt, outer sep = 0pt, minimum size=2mm] (7) at (5,2) {};
\draw (5,-.5) edge node {} (7);
\draw[fill=Yellow] (5,2) circle [radius=0.5] node{$T$};

\end{tikzpicture}
\end{figure}

%\vskip2cm

As the order on norm maps is preserved by $X \odot -$, 
we have $X \odot(Y \odot Z) < X \odot T$, see Remark \ref{rem:order}.
The corresponding operation on trees also preserves the order to give
the following.

\begin{figure}[h!]
\centering
\begin{tikzpicture}[scale=0.7]
\node [fill,circle,draw,inner sep = 0pt, outer sep = 0pt, minimum size=2mm] (1) at (4,1) {};
\node [fill,circle,draw,inner sep = 0pt, outer sep = 0pt, minimum size=2mm] (3) at (5,0) {};
\draw (5,-1) edge node {} (3);
\draw (3) edge node {} (7,2);
\draw (1) edge node {} (3,2);
\draw (1) edge node {} (5,2);
\draw (1) edge node {} (3);
\draw[fill=gRed] (7,2) circle [radius=0.5] node{$Z$};
\draw[fill=gGreen] (5,2) circle [radius=0.5] node{$Y$};
\draw[fill=gYellow] (3,2) circle [radius=0.5] node{$X$};
 \node[label={$<$}] at (8,1) {};

\node [fill,circle,draw,inner sep = 0pt, outer sep = 0pt, minimum size=2mm] (11) at (12,1) {};
\node [fill,circle,draw,inner sep = 0pt, outer sep = 0pt, minimum size=2mm] (33) at (11,0) {};
\draw (11,-1) edge node {} (33);
\draw (33) edge node {} (13,2);
\draw (33) edge node {} (9,2);
\draw (11) edge node {} (11,2);
\draw (11) edge node {} (33);
\draw[fill=gYellow] (9,2) circle [radius=0.5] node{$X$};
\draw[fill=gGreen] (11,2) circle [radius=0.5] node{$Y$};
\draw[fill=gRed] (13,2) circle [radius=0.5] node{$Z$};

 \node[label={$<$}] at (14,1) {};
\node [fill,circle,draw,inner sep = 0pt, outer sep = 0pt, minimum size=2mm] (101) at (17,0) {};
\draw (17,-1) edge node {} (101);
\draw (101) edge node {} (15,2);
\draw (101) edge node {} (19,2);
\draw[fill=gYellow] (15,2) circle [radius=0.5] node{$X$};
\draw[fill=Yellow] (19,2) circle [radius=0.5] node{$T$};
\end{tikzpicture}
\end{figure}

%In the case of $X$ being empty, we proceed inductively to split $Y$ into $Y_1 \odot Y_2$, which means that we reduce our induction to the case of $Y$ consisting of one point only. In this case, adding the red arrow of Case 1 becomes equivalent of adding the red arrow of Case 2, which we already covered.

%Therefore the only non-trivial case is the third one. If the arrow does not begin at the leftmost node, then we can repeat the process above and get the result once again by induction. Therefore we assume without loss of generality that we have added the norm map $N_0^n$ to our diagram $A \odot Z$.
%In this case, we can actually assume $Z$ to be empty: adding the new norm $N_0^n$ is actually equivalent to adding just the norm map $N_0^j$ here, where $j$ is the position of the ``singular'' point between $A$ and $Z$. Therefore, we can collapse $Z$ to an empty diagram and obtain a smaller tree already covered by induction. Adding the new arrow $N_0^j$ then creates a new tree diagram $A'$ from the tree corresponding to $A$ using clockwise rotations. (This $A'$ will be of the form $\emptyset \odot A''$ for some other $A''$.) We then get the desired tree corresponding to $A \odot Z$ with the new arrow added by grafting $Z$ back onto the rightmost leaf of the tree $A'$. Diagrammatically:

For Case 3, we repeat our earlier decomposition. Either, the new arrow starts at the leftmost vertex
or we can split $X$ into $X_1 \odot X_2$ and see that we are in Case 1 or Case 2 (which are solved)
or we are in Case 3 for this new decomposition. For Case 3, we can continue to split up $X_1$ into smaller diagrams $X_1 = X_1' \odot X_2'$, and so on. Continuing in this way, the only new case is 
the following, which we recognise as adding the arrow ($3'$) from the leftmost vertex to 
the pivot of $Y \odot Z$.

\begin{figure}[h!]
\centering
\begin{tikzpicture}
\draw [black,fill=gGreen] (3,0) rectangle (5,0.5);
\draw [black] (5.75,0) rectangle (6.25,0.5);
\draw [black,fill=gRed] (7,0) rectangle (9,0.5);
\draw [black] (1.75,0) rectangle (2.25,0.5);
\draw [black,fill=gYellow] (-1,0) rectangle (1,0.5);

\node [fill,circle,draw,inner sep = 0pt, outer sep = 0pt, minimum size=1.5mm] (Xtarget) at (0.75,.25) {};
\node [fill,circle,draw,inner sep = 0pt, outer sep = 0pt, minimum size=1.5mm] at (0.5,.25) {};
\node [fill,circle,draw,inner sep = 0pt, outer sep = 0pt, minimum size=1.5mm]  at (-0.75,.25) {};
\node [fill,circle,draw,inner sep = 0pt, outer sep = 0pt, minimum size=1.5mm]  at (-0.5,.25) {};
\node at (0.025,0.25) (Bfrom) {$\cdots$};

\node [fill,circle,draw,inner sep = 0pt, outer sep = 0pt, minimum size=1.5mm] (Afrom) at (2,.25) {};
\node [fill,circle,draw,inner sep = 0pt, outer sep = 0pt, minimum size=1.5mm] (Ato1) at (4.75,.25) {};
\node [fill,circle,draw,inner sep = 0pt, outer sep = 0pt, minimum size=1.5mm] (Ato2) at (4.5,.25) {};
\node [fill,circle,draw,inner sep = 0pt, outer sep = 0pt, minimum size=1.5mm] (Ato3) at (3.25,.25) {};
\node [fill,circle,draw,inner sep = 0pt, outer sep = 0pt, minimum size=1.5mm] (Ato4) at (3.5,.25) {};

\draw[->,bend left=80,thick] (Afrom) to (Ato1);
\node at (4.025,0.25) (Ato12) {$\cdots$};

\node [fill,circle,draw,inner sep = 0pt, outer sep = 0pt, minimum size=1.5mm] (from) at (6,.25) {};
\node [fill,circle,draw,inner sep = 0pt, outer sep = 0pt, minimum size=1.5mm] (to1) at (8.75,.25) {};
\node [fill,circle,draw,inner sep = 0pt, outer sep = 0pt, minimum size=1.5mm] (to2) at (8.5,.25) {};
\node [fill,circle,draw,inner sep = 0pt, outer sep = 0pt, minimum size=1.5mm] (to3) at (7.25,.25) {};
\node [fill,circle,draw,inner sep = 0pt, outer sep = 0pt, minimum size=1.5mm] (to4) at (7.5,.25) {};

\draw[->,bend left=80,thick] (from) to (to1);
\node at (8.025,0.25) {$\cdots$};

\draw[->, bend right=60,thick, gRed2] (-0.75,.25) to node[above,black] {$3'$} (from);

\end{tikzpicture}
\end{figure}

%We obtain the diagram resulting from adding (3') to $(X \odot Y) \odot Z$ as follows.
%
%%The first is to add the arrow ($\alpha$) 
%%from the leftmost vertex of $X$ to the rightmost vertex of $X$.
%%This gives a new norm diagram $\emptyset \odot X' > X$.
%%We may then use Case 2 to add an arrow ($\beta$) connecting the two pivots in 
%%$(\emptyset \odot X') \odot Y$ to give
%%$\emptyset \odot (X' \odot Y)$. 
%%Hence, 
%%\[
%%(X \odot Y) \odot Z 
%%< 
%%((\emptyset \odot X') \odot Y) \odot Z
%%<
%%(\emptyset \odot (X' \odot Y)) \odot Z
%%<
%%\emptyset \odot ((X' \odot Y) \odot Z)
%%\]
%%Where the last term is obtained by adding an arrow ($\gamma$)
%%connecting the outermost two pivots. See the Figure \ref{fig:case3diagram}
%%for a representation of this process, where the three coloured blocks 
%%represent $X'$, $Y$ and $Z$. 

The arrow ($3'$) adds the arrow ($\alpha$) in the diagram below by restriction (but not vice versa). Therefore, we can obtain our diagram by first adding ($\alpha$) and then ($3'$). The diagram $X \odot Y$ with ($\alpha$) added is of the form $\emptyset \odot B$ for some other $B$. In particular, $ X \odot Y \leq \emptyset \odot B$, which by induction induces the correct ordering on the corresponding trees. Using the order preserving properties of $\odot$ we have $(X \odot Y) \odot Z \leq (\emptyset \odot B) \odot Z$. 

If we now add the arrow ($3'$) to $((\emptyset \odot B) \odot Z$, we are adding a new arrow from a pivot to a pivot, which is Case 2. 

\begin{figure}[h!]
\centering
\begin{tikzpicture}

\draw [black,fill=gGreen] (3,0) rectangle (5,0.5);
\draw [black] (5.75,0) rectangle (6.25,0.5);
\draw [black,fill=gRed] (7,0) rectangle (9,0.5);
\draw [black] (1.75,0) rectangle (2.25,0.5);
\draw [black,fill=Yellow] (-1,0) rectangle (1,0.5);
\draw [black] (-2.25,0) rectangle (-1.75,0.5);

\node [fill,circle,draw,inner sep = 0pt, outer sep = 0pt, minimum size=1.5mm] (Xtarget) at (0.75,.25) {};
\node [fill,circle,draw,inner sep = 0pt, outer sep = 0pt, minimum size=1.5mm] at (0.5,.25) {};
\node [fill,circle,draw,inner sep = 0pt, outer sep = 0pt, minimum size=1.5mm]  at (-0.75,.25) {};
\node [fill,circle,draw,inner sep = 0pt, outer sep = 0pt, minimum size=1.5mm]  at (-0.5,.25) {};
\node [fill,circle,draw,inner sep = 0pt, outer sep = 0pt, minimum size=1.5mm] (start) at (-2,.25) {};
\node at (0.025,0.25) (Bfrom) {$\cdots$};

\node [fill,circle,draw,inner sep = 0pt, outer sep = 0pt, minimum size=1.5mm] (Afrom) at (2,.25) {};
\node [fill,circle,draw,inner sep = 0pt, outer sep = 0pt, minimum size=1.5mm] (Ato1) at (4.75,.25) {};
\node [fill,circle,draw,inner sep = 0pt, outer sep = 0pt, minimum size=1.5mm] (Ato2) at (4.5,.25) {};
\node [fill,circle,draw,inner sep = 0pt, outer sep = 0pt, minimum size=1.5mm] (Ato3) at (3.25,.25) {};
\node [fill,circle,draw,inner sep = 0pt, outer sep = 0pt, minimum size=1.5mm] (Ato4) at (3.5,.25) {};

\draw[->,bend left=80,thick] (Afrom) to (Ato1);
\node at (4.025,0.25) (Ato12) {$\cdots$};

\node [fill,circle,draw,inner sep = 0pt, outer sep = 0pt, minimum size=1.5mm] (from) at (6,.25) {};
\node [fill,circle,draw,inner sep = 0pt, outer sep = 0pt, minimum size=1.5mm] (to1) at (8.75,.25) {};
\node [fill,circle,draw,inner sep = 0pt, outer sep = 0pt, minimum size=1.5mm] (to2) at (8.5,.25) {};
\node [fill,circle,draw,inner sep = 0pt, outer sep = 0pt, minimum size=1.5mm] (to3) at (7.25,.25) {};
\node [fill,circle,draw,inner sep = 0pt, outer sep = 0pt, minimum size=1.5mm] (to4) at (7.5,.25) {};

\draw[->,bend left=80,thick] (from) to (to1);
\node at (8.025,0.25) {$\cdots$};

%\draw[->, bend right=60,thick, gRed2] (start) to node[above,black] {$\alpha$} (Xtarget);
\draw[->, bend right=50,thick, gRed2] (start) to node[above,black] {$\alpha$} (Ato1);
\draw[->, bend right=60,thick, gRed2] (start) to node[above,black] {$3'$} (from);

\end{tikzpicture}
\caption{The arrow additions for Case 3.\label{fig:case3diagram}}
\end{figure}

%The corresponding trees to each stage of the above satisfy the analogous inequalities
%either by Case 2 or by induction (as the norm diagrams involved are for smaller groups). 
%Which completes Case 3.
\end{proof}

\normalfont

\section{Generalising to other cyclic groups}\label{sec:generalising}

We would like to have a closed formula for the cardinality of $N_\infty(G)$ for all finite cyclic $G$. We shall explore the obstructions to obtaining such a result in this section. The main result is the construction of a lower bound of the number of such operads for $G=C_{p^{n_1}_1} \cdots C_{p^{n_k}_k}$. Let us highlight the style of norm maps that we must deal with in this situation. Figure~\ref{cpq} gives the 10 possible $N_\infty$-operads for $G = C_{pq}$.

\begin{figure}[h!]
 \begin{tikzpicture}[->, node distance=2cm, auto,scale = 0.5]
\node (11) at (2.000000,0) {$C_{p}$};
\node (p1) at (0.000000,0) {$C_{1}$};
\node (q1) at (2.000000,-2.0) {$C_{pq}$};
\node (pq1) at (0.000000,-2.0) {$C_{q}$};

\node (12) at (7.000000,0) {$C_{p}$};
\node (p2) at (5.000000,0) {$C_{1}$};
\node (q2) at (7.000000,-2.0) {$C_{pq}$};
\node (pq2) at (5.000000,-2.0) {$C_{q}$};
\draw (p2) to (12);

\node (13) at (12.000000,0) {$C_{p}$};
\node (p3) at (10.000000,0) {$C_{1}$};
\node (q3) at (12.000000,-2.0) {$C_{pq}$};
\node (pq3) at (10.000000,-2.0) {$C_{q}$};
\draw (p3) to (pq3);

\node (14) at (17.000000,-0) {$C_{p}$};
\node (p4) at (15.000000,-0) {$C_{1}$};
\node (q4) at (17.000000,-2.0) {$C_{pq}$};
\node (pq4) at (15.000000,-2.0) {$C_{q}$};
\draw (pq4) to (q4);
\draw (p4) to (14);

\node (15) at (22.000000,-0) {$C_{p}$};
\node (p5) at (20.000000,-0) {$C_{1}$};
\node (q5) at (22.000000,-2) {$C_{pq}$};
\node (pq5) at (20.000000,-2) {$C_{q}$};
\draw (15) to (q5);
\draw (p5) to (pq5);

\node (16) at (2.000000,-5) {$C_{p}$};
\node (p6) at (0.000000,-5) {$C_{1}$};
\node (q6) at (2.000000,-7) {$C_{pq}$};
\node (pq6) at (0.000000,-7) {$C_{q}$};
\draw(p6) to (pq6);
\draw(p6) to (16);

\node (17) at (7.000000,-5) {$C_{p}$};
\node (p7) at (5.000000,-5) {$C_{1}$};
\node (q7) at (7.000000,-7.0) {$C_{pq}$};
\node (pq7) at (5.000000,-7.0) {$C_{q}$};
\draw (p7) to (q7);
\draw (p7) to (pq7);
\draw (pq7) to (q7);
\draw (p7) to (17);

\node (18) at (12.000000,-5) {$C_{p}$};
\node (p8) at (10.000000,-5) {$C_{1}$};
\node (q8) at (12.000000,-7.0) {$C_{pq}$};
\node (pq8) at (10.000000,-7.0) {$C_{q}$};
\draw (p8) to (18);
\draw (p8) to (q8);
\draw (18) to (q8);
\draw (p8) to (pq8);

\node (19) at (17.000000,-5) {$C_{p}$};
\node (p9) at (15.000000,-5) {$C_{1}$};
\node (q9) at (17.000000,-7) {$C_{pq}$};
\node (pq9) at (15.000000,-7) {$C_{q}$};
\draw (p9) to (19);
\draw (p9) to (q9);
\draw (p9) to (pq9);

\node (110) at (22.000000,-5) {$C_{p}$};
\node (p10) at (20.000000,-5) {$C_{1}$};
\node (q10) at (22.000000,-7) {$C_{pq}$};
\node (pq10) at (20.000000,-7) {$C_{q}$};
\draw (p10) to (110);
\draw (p10) to (q10);
\draw (p10) to (pq10);
\draw (pq10) to (q10);
\draw (110) to (q10);
\end{tikzpicture}
\caption{The 10 possible $N_\infty$-operad structures for $G=C_{pq}$.}\label{cpq}
\end{figure}

A key observation to make is that there is an ``odd one out'' among these diagrams. In particular, consider the following.

\vspace{5mm}

\begin{figure}[h!]
 \begin{tikzpicture}[->, node distance=2cm, auto,scale = 0.75]
\node (11) at (2.000000,0) {$C_{p}$};
\node (p1) at (0.000000,0) {$C_{1}$};
\node (q1) at (2.000000,-2.0) {$C_{pq}$};
\node (pq1) at (0.000000,-2.0) {$C_{q}$};
\draw (p1) to (11);
\draw (p1) to (q1);
\draw (p1) to (pq1);
\end{tikzpicture}
\end{figure}

\vspace{5mm}

This transfer system is different from the other nine because it is the only one where the diagonal is not forced by the composition and restriction rules of Corollary~\ref{operad-struc}. That is, if we were to remove the norm map $N^{pq}_1$, then the resulting diagram is still a valid $N_\infty$-operad. It follows that this $N_\infty$-operad cannot be formed by just combining those for $G=C_p$ and $G=C_q$. We will call such an operad \emph{mixed}. If it can be obtained from the component groups, then we will call it \emph{pure}.

The main result of this section will be to give a closed expression for the number of pure $N_\infty$-operads for $G=C_{p_1^{n_1} p_2^{n_2} \cdots p_k^{n_k}}$, which provides a non-trivial lower bound for the total number of $N_\infty$-operads for $G$.

Trying to manually enumerate the norm maps for $G = C_{p^n q^m}$, $p \neq q$ or even just $C_{p^3q}$ shows that the situation is already intangibly complicated. Indeed, we have computationally verified that there are 544 $N_\infty$-operads for $C_{p^3q}$.

\subsection{Enumerating pure operads}

We begin with a more formal definition of ``pure'' and ``mixed''.

Let $Z$ be an $N_\infty$-operad for $G = C_{p^nq^m}$. That it, $Z$ is an $N_\infty$-diagram on the lattice below.

\vspace{5mm}

\begin{figure}[h!]
\begin{tikzpicture}[scale = 0.75]
\node [fill,circle,draw,inner sep = 0pt, outer sep = 0pt, minimum size=2mm] at (0,0) {};
\node [fill,circle,draw,inner sep = 0pt, outer sep = 0pt, minimum size=2mm] at (1,0) {};
\node [label=center:{$\dots$}]at (2,0) {};
\node [fill,circle,draw,inner sep = 0pt, outer sep = 0pt, minimum size=2mm] at (3,0) {};
\node [fill,circle,draw,inner sep = 0pt, outer sep = 0pt, minimum size=2mm] at (4,0) {};
\node [label=center:$\vdots$] at (0,1.15) {};
\node [fill,circle,draw,inner sep = 0pt, outer sep = 0pt, minimum size=2mm] at (0,2) {};
\node [fill,circle,draw,inner sep = 0pt, outer sep = 0pt, minimum size=2mm] at (1,2) {};
\node [label=center:{$\dots$}]at (2,2) {};
\node [fill,circle,draw,inner sep = 0pt, outer sep = 0pt, minimum size=2mm] at (3,2) {};
\node [fill,circle,draw,inner sep = 0pt, outer sep = 0pt, minimum size=2mm] at (4,2) {};
\node [label=center:$\vdots$] at (4,1.15) {};
\node [label=center:$\iddots$] at (2,1.1) {};

\draw [decorate,decoration={brace,amplitude=10pt}]
(4.0,-0.5) -- (0.0,-0.5) node [black,midway,yshift=-0.6cm] {$m+1$};

\draw [decorate,decoration={brace,amplitude=10pt}]
(-0.5,-0.0) -- (-0.5,2.0) node [black,midway,xshift=-0.6cm, rotate=90] {$n+1$};
\end{tikzpicture}
\end{figure}

\vspace{5mm}

Then we can consider the rows and columns of these diagrams to obtain a family of diagrams for $G=C_{p^m}$, namely $\{X_i\}_{1 \leqslant i \leqslant n+1}$ and a family of diagrams for $G=C_{p^n}$, namely $\{Y_i\}_{1 \leqslant i \leqslant m+1}$. Note that these are indeed valid diagrams as can be seen from observing the restriction and composition rules.

\begin{figure}[h!]
\centering
\begin{minipage}{.5\textwidth}
\centering
\begin{tikzpicture}[scale = 0.75]
\draw [black,fill=gGreen] (-.25,-0.25) rectangle (4.25,0.25);
\draw [black,fill=gGreen] (-.25,1.75) rectangle (4.25,2.25);

\node [fill,circle,draw,inner sep = 0pt, outer sep = 0pt, minimum size=2mm] at (0,0) {};
\node [fill,circle,draw,inner sep = 0pt, outer sep = 0pt, minimum size=2mm] at (1,0) {};
\node [label=center:{$\dots$}]at (2,0) {};
\node [fill,circle,draw,inner sep = 0pt, outer sep = 0pt, minimum size=2mm] at (3,0) {};
\node [fill,circle,draw,inner sep = 0pt, outer sep = 0pt, minimum size=2mm] at (4,0) {};
\node [label=center:$\vdots$] at (0,1.15) {};
\node [fill,circle,draw,inner sep = 0pt, outer sep = 0pt, minimum size=2mm] at (0,2) {};
\node [fill,circle,draw,inner sep = 0pt, outer sep = 0pt, minimum size=2mm] at (1,2) {};
\node [label=center:{$\dots$}]at (2,2) {};
\node [fill,circle,draw,inner sep = 0pt, outer sep = 0pt, minimum size=2mm] at (3,2) {};
\node [fill,circle,draw,inner sep = 0pt, outer sep = 0pt, minimum size=2mm] at (4,2) {};
\node [label=center:$\vdots$] at (4,1.15) {};
\node [label=center:$\iddots$] at (2,1.1) {};

\draw [decorate,decoration={brace,amplitude=10pt}]
(4.0,-0.5) -- (0.0,-0.5) node [black,midway,yshift=-0.6cm] {$m+1$};

\draw [decorate,decoration={brace,amplitude=10pt}]
(-0.5,-0.0) -- (-0.5,2.0) node [black,midway,xshift=-0.6cm, rotate=90] {$n+1$};

\node at (4.8,2.0) {$X_{n+1}$};
\node at (4.65,0.0) {$X_1$};
\end{tikzpicture}
\end{minipage}%
\begin{minipage}{.5\textwidth}
\centering
\begin{tikzpicture}[scale = 0.75]
\draw [black,fill=gYellow] (-.25,-0.25) rectangle (.25,2.25);
\draw [black,fill=gYellow] (3.75,-0.25) rectangle (4.25,2.25);

\node [fill,circle,draw,inner sep = 0pt, outer sep = 0pt, minimum size=2mm] at (0,0) {};
\node [fill,circle,draw,inner sep = 0pt, outer sep = 0pt, minimum size=2mm] at (1,0) {};
\node [label=center:{$\dots$}]at (2,0) {};
\node [fill,circle,draw,inner sep = 0pt, outer sep = 0pt, minimum size=2mm] at (3,0) {};
\node [fill,circle,draw,inner sep = 0pt, outer sep = 0pt, minimum size=2mm] at (4,0) {};
\node [label=center:$\vdots$] at (0,1.15) {};
\node [fill,circle,draw,inner sep = 0pt, outer sep = 0pt, minimum size=2mm] at (0,2) {};
\node [fill,circle,draw,inner sep = 0pt, outer sep = 0pt, minimum size=2mm] at (1,2) {};
\node [label=center:{$\dots$}]at (2,2) {};
\node [fill,circle,draw,inner sep = 0pt, outer sep = 0pt, minimum size=2mm] at (3,2) {};
\node [fill,circle,draw,inner sep = 0pt, outer sep = 0pt, minimum size=2mm] at (4,2) {};
\node [label=center:$\vdots$] at (4,1.15) {};
\node [label=center:$\iddots$] at (2,1.1) {};

\draw [decorate,decoration={brace,amplitude=10pt}]
(4.0,-0.5) -- (0.0,-0.5) node [black,midway,yshift=-0.6cm] {$m+1$};

\draw [decorate,decoration={brace,amplitude=10pt}]
(-0.5,-0.0) -- (-0.5,2.0) node [black,midway,xshift=-0.6cm, rotate=90] {$n+1$};

\node at (0.0,2.5) {$Y_{m+1}$};
\node at (4.0,2.5) {$Y_{1}$};
\end{tikzpicture}
\end{minipage}
\end{figure}

We shall say that an $N_\infty$-operad is \emph{pure} if it is completely determined by the systems $\{X_i\}$ and $\{Y_j\}$ in the sense of Remark~\ref{rem:comp}. %Rubin~\cite[Theorem A.2]{rubin2} explains how to complete a set of norm maps to a transfer systems, see also Corollary \ref{operad-struc}. 
If an operad is not pure, then we will say that it is \emph{mixed}.
Note that an operad is mixed if and only if after removing all norm maps of the form $N_{p^i}^{p^jq}$, $j \neq i$, and completing the set of norm maps according to the rules of Corollary \ref{operad-struc}, one does \emph{not} recover the original operad one started with.

%Note that this is equivalent to saying that it is pure if either it has no norms of the form $N_{p^i}^{p^jq}$, or, removing any of the norms of the form $N_{p^i}^{p^jq}$ for $j > i$ renders the operad invalid. If an operad is not pure, then we will say that it is \emph{mixed}. Analogues of this notion for $G=C_{p_1^{n_1} p_2^{n_2} \cdots p_k^{n_k}}$ are obvious.

\begin{example}
The following operad is pure as it has no diagonals, that is, no norm maps of the form $N_{p^i}^{p^jq}$. Therefore there is no condition to check

%\vspace{-3mm}

\begin{figure}[H]
 \begin{tikzpicture}[->, node distance=2cm, auto,scale = 0.65]
\node (11) at (2.000000,0) {$C_{p}$};
\node (p1) at (0.000000,0) {$C_{1}$};
\node (q1) at (2.000000,-2.0) {$C_{pq}$};
\node (pq1) at (0.000000,-2.0) {$C_{q}$};
\draw (p1) to (11);
\draw (p1) to (pq1);
\end{tikzpicture}
\end{figure}

%\vspace{-3mm}

The following is also pure, as when we remove the diagonal (highlighted in red) then the composition rule of Corollary~\ref{operad-struc} is violated. Completing the set of norm maps according to the rules forces the diagonal, and we recover the original operad that we started with.

%\vspace{-3mm}

\begin{figure}[H]
 \begin{tikzpicture}[->, node distance=2cm, auto,scale = 0.65]
\node (11) at (2.000000,0) {$C_{p}$};
\node (p1) at (0.000000,0) {$C_{1}$};
\node (q1) at (2.000000,-2.0) {$C_{pq}$};
\node (pq1) at (0.000000,-2.0) {$C_{q}$};
\draw[red] (p1) to (q1);
\draw (p1) to (11);
\draw (p1) to (pq1);
\draw (pq1) to (q1);
\end{tikzpicture}
\end{figure}
\end{example}

%\vspace{-3mm}

By using the restriction rules, we see that there is a natural ordering on the systems $\{X_i\}$ and $\{Y_j\}$. Indeed, $X_1 \leqslant X_2 \leqslant \cdots \leqslant X_{n+1}$ and $Y_1 \leqslant Y_2 \leqslant \cdots \leqslant Y_{m+1}$.

\begin{defn}
We will denote by $\mathcal{P}(n,r)$ the number of length $r$ paths in the $n$-Tamari lattice $\mathcal{A}_n$. For example, $\mathcal{P}(n,2)$ gives the sequence $1, 1, 3, 13, 68, 399, 2530, 16965, \dots$ (starting at $n=0$).
In Ch\^{a}tel and Pons~\cite{MR3345297} this is given the closed form
$$\dfrac{2(4n+1)!}{(n+1)!(3n+2)!}.$$
\end{defn}

\begin{thm}\label{thm2}
The number of pure $N_\infty$-operads for $G=C_{p^n}C_{q^m}$ is given as
$$\mathcal{P}(n+1,m) \mathcal{P}(m+1,n)$$
In general, for $G=C_{p^{n_1}_1} \cdots C_{p^{n_k}_k}$ the number of pure operads is
$$
\prod^k_{j,i=1} \mathcal{P}(n_i+1,n_j).
$$
\end{thm}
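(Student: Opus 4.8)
The plan is to set up a bijection between pure $N_\infty$-operads and tuples of multichains in Tamari lattices, and then to count those multichains with $\mathcal{P}$. I would start with the two-prime case $G=C_{p^n}C_{q^m}$, whose subgroup lattice is the rectangular grid with vertices $C_{p^aq^b}$ for $0\leqslant a\leqslant n$, $0\leqslant b\leqslant m$, a norm existing only between comparable grid points. Given an operad $Z$, I extract the row systems $X_a$ (the norms changing only the $q$-exponent, at fixed $p$-level $a$) and the column systems $Y_b$ (those changing only the $p$-exponent, at fixed $q$-level $b$). The first step is to verify, using the Restriction and Composition rules of Corollary~\ref{operad-struc}, that each $X_a$ is a genuine $C_{q^m}$-transfer system and each $Y_b$ a genuine $C_{p^n}$-transfer system, so that by Theorem~\ref{mainthm} and the identification $N_\infty(C_{p^n})\cong\mathcal{A}_{n+1}$ they are points of $\mathcal{A}_{m+1}$ and $\mathcal{A}_{n+1}$ respectively.

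The second step is monotonicity. Restricting a $q$-norm present in row $a$ along an inclusion $C_{p^{a'}q^{b}}<C_{p^{a}q^{b}}$ with $a'<a$ shows that norm is already present in row $a'$; dually for columns. Hence the rows form a multichain in $\mathcal{A}_{m+1}$ and the columns a multichain in $\mathcal{A}_{n+1}$, and $Z\mapsto(\{X_a\},\{Y_b\})$ sends a pure operad to a pair consisting of such multichains, which is exactly the data counted by a product of two path-numbers $\mathcal{P}$ of the relevant lengths in these two Tamari lattices. (The two lengths are the numbers of rows and of columns, which is where I would pin down the precise arguments of $\mathcal{P}$ in the stated formula.)

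The heart of the proof is that this assignment is a bijection onto \emph{all} such pairs. Injectivity is the definition of purity: the operad is recovered by completing its axis-parallel norms. For surjectivity I would take an arbitrary pair of multichains, form the union $A$ of the corresponding axis-parallel norms, and complete $A$ under the rules of Corollary~\ref{operad-struc} via the procedure of Rubin~\cite[Theorem A.2]{rubin2}. The crucial claim, and the step I expect to be the main obstacle, is that the completion $\overline{A}$ adds only diagonal norms and no new axis-parallel norm, so that the row and column systems of $\overline{A}$ are exactly the chosen multichains and $\overline{A}$ is pure. This rests on the fact that restriction preserves the direction of a norm (the restriction of a $q$-only norm is again a $q$-only norm), so that any axis-parallel norm forced by restricting a composite or diagonal is already forced by the within-direction monotonicity built into the multichains; one must check carefully that this cannot fail through an iterated composition--restriction. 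Counting the two now-independent factors yields the product of path-numbers.

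Finally, for $G=C_{p_1^{n_1}}\cdots C_{p_k^{n_k}}$ the subgroup lattice is a $k$-dimensional grid and the axis-parallel norms split into $k$ directions. I would run the same extraction within each pair of prime-directions $(i,j)$, obtaining for each a multichain valued in $\mathcal{A}_{n_j+1}$, and argue that a pure operad is assembled freely and independently from these pairwise data, so that the count factors as the product over ordered pairs of the corresponding path-numbers $\mathcal{P}$, giving the stated formula. The genuine difficulty here is establishing independence of the pairwise data and re-verifying, when $k\geqslant 3$, that no cross-direction composition forces an unexpected axis-parallel norm; I expect this to require an induction on $k$ that reduces every three-direction interaction to the two-direction situation already settled, exactly as the two-prime completion lemma anticipates.
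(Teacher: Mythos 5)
Your proposal follows essentially the same route as the paper's proof: extract the row and column systems, observe via the restriction rule that they form multichains in $\mathcal{A}_{m+1}$ and $\mathcal{A}_{n+1}$ respectively, count these as (possibly stationary) paths of lengths $n$ and $m$ --- giving $\mathcal{P}(m+1,n)$ and $\mathcal{P}(n+1,m)$ --- and multiply, with the general case handled by the analogous pairwise extraction. If anything you are more careful than the paper, whose argument is a two-line count: the bijectivity issues you isolate (injectivity from the very definition of purity, and the completion lemma that closing up the axis-parallel norms under restriction and composition forces only diagonal norms and no new axis-parallel ones) are exactly the steps the paper leaves implicit, and the reason you give for that lemma --- restrictions of diagonal norms back to axis directions are already accounted for by the multichain monotonicity together with closure of each row and column under its own restrictions --- is the correct one.
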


\begin{proof}
This is an exercise in counting using the orderings $X_1 \leqslant X_2 \leqslant \cdots \leqslant X_{n+1}$ and $Y_1 \leqslant Y_2 \leqslant \cdots \leqslant Y_{m+1}$. Once we have picked $X_1$, we must take a (possibly stationary) path of length $n$ through the Tamari lattice $\mathcal{A}_{m+1}$ to pick the other entries. Therefore, there are $\mathcal{P}(m+1,n)$ such options for the $X_i$. We then have the choices for the $Y_j$ giving us total of $\mathcal{P}(n+1,m)$ options via a similar argument. Combining these, we get the required total of $\mathcal{P}(n+1,m) \mathcal{P}(m+1,n)$.

The proof for the general case follows similarly.
\end{proof}

\begin{example}
One can compute the first few values for the sequence appearing in Theorem~\ref{thm2} (starting at $n=0$ for $m=1$) to be $1,9,52,340,2394,17710, \dots$. This sequence does not appear on the \texttt{OEIS} at the time of writing.
\end{example}

\bibliographystyle{plain}
\bibliography{norm}

\end{document}